\documentclass[
  a4paper, 
  reqno, 
  oneside, 
  12pt
]{amsart}

\usepackage[utf8]{inputenc}

\usepackage[dvipsnames]{xcolor} 
\colorlet{cite}{red}
\usepackage{tikz}  

\usetikzlibrary{arrows,positioning}
\tikzset{ 
  baseline=-2.3pt,
  text height=1.5ex, text depth=0.25ex,
  >=stealth,
  node distance=2cm,
  mid/.style={fill=white,inner sep=2.5pt},
}

\setlength{\textwidth}{6.7in}
\setlength{\oddsidemargin}{-0.3in}
\setlength{\evensidemargin}{-0.3in}
\setlength{\textheight}{9in}
\setlength{\topmargin}{0in}

\usepackage{lmodern}
\usepackage{tikz-cd}
\usepackage{amsthm, amssymb, amsfonts}

\usepackage{amsthm, amssymb, amsfonts}	
\usepackage[all]{xy}
\usepackage{graphicx,caption,subcaption}
\usepackage[font=small,labelfont=bf]{caption}
\usepackage{braket}  
\usepackage{microtype}
\usepackage{DotArrow}
\usepackage[makeroom]{cancel}
\usepackage[%
  bookmarks=true,			
  unicode=true,			
  pdftitle={MonodromyProblem},		%
  pdfauthor={LopezGarcia-Valencia},	%
  pdfkeywords={Monodromy problem, tangential-center problem},	
  colorlinks=true,		
  linkcolor=black,			
  citecolor=black,		
  filecolor=magenta,		
  urlcolor=RoyalBlue			
]{hyperref}				
\usepackage{cleveref}

\newtheoremstyle{mydef}
  {}		
  {}		
  {}		
  {}		
  {\scshape}	
  {. }		
  { }		
  {\thmname{#1}\thmnumber{ #2}\thmnote{ #3}}	

\newtheorem{theorem}{Theorem}[section]
\newtheorem*{theorem*}{Theorem}

\newtheorem*{proposition*}{Proposition}
\newtheorem{lemma}[theorem]{Lemma}
\newtheorem*{lemma*}{Lemma}

\newtheorem*{corollary*}{Corollary}
\theoremstyle{definition}
\newtheorem{definition}[theorem]{Definition}
\newtheorem{example}[theorem]{Example}

\theoremstyle{remark}
\newtheorem{remark}[theorem]{Remark}

\newtheorem*{conjecture*}{Conjecture}
\usepackage{tikz}

\author{Daniel L\'opez-Garcia and Fabricio Valencia}
\subjclass[2020]{14D05, 32S40, 34M35}
\address{}
\date{\today}

\address{D. L\'opez-Garcia, F. Valencia - Instituto de Matem\'atica e Estat\'istica, Universidade de S\~ao Paulo, Rua do Mat\~ao 1010, Cidade Universit\'aria, 05508-090 S\~ao Paulo - Brazil. \newline  
      \phantom{xx}
 dflopezga@ime.usp.br, fabricio.valencia@ime.usp.br}

\title{On the monodromy action for $f(x,y)=g(x)+h(y)$}

\keywords{Monodromy action, vanishing cycles, Picard-Lefschetz theory, tangential center-focus problem}
\begin{document}
\maketitle

\begin{abstract}
We solve the problem of determining under which conditions the monodromy of a vanishing cycle generates the whole homology of a regular fiber for a  polynomial $f(x,y)=g(x)+h(y)$ where $h$ and $g$ are polynomials with real coefficients having real critical points and satisfying $\deg(g)\cdot\deg(h)\leq 2500 $ and $\gcd(\deg(g),\deg(h))\leq 2$. As an application, under the same assumptions we solve the tangential-center problem which is known to be linked to the weak 16th Hilbert problem.  

\end{abstract}

\section{Introduction}

Let $f(x,y)\in \mathbb{C}[x,y]$ be a tame polynomial with set of critical values $C$ and Milnor number $\mu<\infty$, see Definition \ref{TamePolynomial}. It follows that there are 1-dimensional \emph{vanishing cycles} $\delta_1,\delta_2, \ldots, \delta_{\mu}$ associated to the critical values of $f$, so that they span the $1$-homology  $H_{1}(f^{-1}(b))$ of a regular fiber $f^{-1}(b)$. Besides, there is an action $\textbf{Mon}$ of the fundamental group $\pi_1(\mathbb{C}\setminus C)$ on the $1$-homology $H_1(f^{-1}(b))$ which is given by local trivialization  of  $f^{-1}(\gamma)$ where $\gamma$ is any loop in $\mathbb{C}\setminus C$. We shall refer to such an action as the \emph{monodromy action} of $f$. Given a cycle $[\delta]\in H_1(f^{-1}(b))$ we get that the \emph{Picard-Lefschetz formula} computes the monodromy of $\delta$ around a critical value $c\in C$ by means of the expression
$$
	\textbf{Mon}_{c}(\delta)=\delta-\sum_k\langle \delta, \delta_k\rangle\delta_k,
$$
where $k$ runs through all the vanishing cycles that vanish at critical points inside $f^{-1}(c)$ and $\langle \delta, \delta_k\rangle$ stands for the intersection number between $\delta$ and $\delta_k$. The reader is recommended to visit \cite[c. 1-2]{AGV}, \cite[\S 6.3, \S 7.4]{M}, \cite{La} for specific details.

A natural question arises regarding whether the monodromy action on a generic fiber is transitive or not. Of course, we may rephrase such a question by asking under which conditions the monodromy of a vanishing cycle generates the whole $1$-homology of a regular fiber. This question turns out to have lots of interesting implications and is linked to several classical problems, as the weak version of the well known 16th Hilbert problem \cite{A} and the tangential version of the classical center–focus problem that goes back to Poincar\'e \cite{P} and Lyapunov \cite{Ly} and asks to characterize the centers of planar polynomial vector fields. Ilyashenko in \cite{I} introduced into the picture an approach to tackle the tangential center-focus problem by studying the monodromy in the case of generic polynomials. To understand the behavior of the monodromy action also brings important insights when studying polynomial foliations in $\mathbb{C}^2$. In particular, it has been used to describe some of the irreducible components of the algebraic set of polynomial foliations of certain fixed degree with at least one Morse center \cite{I,LiN,M2,Z}. To describe all the irreducible components of such an algebraic set is still an open problem.

The aim of this short paper is to solve the monodromy problem for polynomials $f(x,y)=g(x)+h(y)$ where $h$ and $g$ are polynomials with real coefficients having real critical points and satisfying $\deg(g)\cdot\deg(h)\leq 2500$ and $\gcd(\deg(g),\deg(h))\leq 2$, see Theorem \ref{MainThm1}. As an application, under the same assumptions we also solve the related tangential-center problem, see Theorem \ref{MainThm2}. These results clearly generalize, up to some degree restrictions,  the main results proved in \cite{CM} for hyperelliptic curves and the main results in \cite{LG1} for more general algebraic curves. Our approach relies in exploiting the combinatorial properties derived from the Dynkin diagram of $f$ as well as its intersection matrix of vanishing cycles. 

On the one hand, it is important to point out that the degree restriction over $g$ and $h$ depends on some software computations which were checked up to $\deg(g)\cdot\deg(h)\leq 2500$, see Lemma \ref{label: prop orbits for 1 critical value}. As expected, this condition could be actually enlarged to higher degrees, but it requires a lot of compilation time. On the other hand, the condition $\gcd(\deg(g),\deg(h))\leq 2$ in the checked cases implies that the eigenvalues of the intersection matrix are different, something that is needed to determine the Krylov subspaces associated to the vanishing cycles. Otherwise, such subspaces can not be determined by following our approach. For instance, the case $\deg(g)=\deg(h)=4$ was treated in \cite{LG1} to exemplify the additional algebraic conditions between $g$ and $h$ that characterize the subspaces generated by the orbit through a vanishing cycle with respect to the monodromy action. Based on these facts we predict that our main results are still valid for polynomials $f(x,y)=g(x)+h(y)$ where $h$ and $g$ are polynomials with real coefficients having real critical points and satisfying $\gcd(\deg(g),\deg(h))\leq 2$. 

\vspace{.2cm}
{\bf Acknowledgments:} We thank Hossein Movasati for helpful and insightful discussions. We are also grateful to Milo López for his useful suggestions on the computer calculations parts. L\'opez-Garcia was supported by Grant 2022/04705-8 Sao Paulo Research Foundation - FAPESP. Valencia was supported by Grant 2020/07704-7 Sao Paulo Research Foundation - FAPESP.

\section{The monodromy problem}

In this section we study the monodromy problem for polynomials of the form $f(x,y)=g(x)+h(y)$ where $h$ and $g$ are polynomials with real coefficients having real critical points. In order to do so, we need first to describe the intersection matrix of the vanishing cycles in a regular fiber $f^{-1}(b)$, which can be defined after looking at the join cycles construction and the intersection formula introduced in \cite[\S 7.10]{M} and widely explored in \cite{LG1}. Let us briefly introduce some necessary terminology. By considering a deformation of $f$ we may assume that it is a Morse function and that all of its critical values are pairwise distinct, see \cite{AGV}. Also, up to translations, we may further assume that the critical values of $g$ are positive, the critical values of $h$ are negative, and $b=0$ is a regular value for both polynomials. It is worth mentioning that the intersection matrix does not change after performing small deformations of $f$, although the monodromy action does. For instance, a polynomial can be deformed to a generic polynomial without changing the intersection matrix, but in the generic case the monodromy action is transitive, compare \cite{I}. Let $c^h_i$, $i=1,\cdots,e-1$ and $c^g_j$, $j=1,\cdots,d-1$ respectively denote the critical values of $h$ and $g$, and let $r_i$ and $s_j$ be paths from $0$ to $c_i^h$ and $c^g_j$ such that they have no self-intersections and intersect each other only at $0$. We fix the order $(s_1,s_2,\ldots, s_{d-1}, r_1,r_2,\ldots r_{e-1})$ about $0$, in such a way each path is anticlockwise oriented, as depicted in Figure \ref{setofpaths} below. The labeling of the paths has been chosen such that  $0< c^g_1< c^g_2< \cdots< c^g_{d-1}$ and  $c^h_{e-1}< c^h_{e-2}< \cdots < c^h_{1}< 0$.

\begin{figure}[h!]
	\centering
	\begin{footnotesize}
		\begin{tikzpicture}
			\matrix (m) [matrix of math nodes, row sep=0.4em,
			column sep=1.5em]{
				c^h_{e-1} &c^h_{e-2}&\ldots		&c^h_1&0&c^g_1&c^g_2&\ldots&	c^g_{d-1},\\
			};			
			\path[-stealth]
			(m-1-5) edge [bend left=45] node [above] {$r_{e-1}$}  (m-1-1) 
			edge [bend left=45] node [above] {$r_{e-2}$} (m-1-2)
			edge [bend left=45] node [above] {$r_{1}$}  (m-1-4) 
			
			edge [bend left=45] node [below] {$s_{1}$} (m-1-6)
			edge [bend left=45] node [below] {$s_{2}$} (m-1-7)
			edge [bend left=45] node [below] {$s_{d-1}$} (m-1-9);
		\end{tikzpicture}
	\end{footnotesize}
	\caption{\footnotesize A distinguished set of paths from $0$ to the critical values of $h$ and $g$ in $\mathbb{C}$}
	\label{setofpaths}
\end{figure}
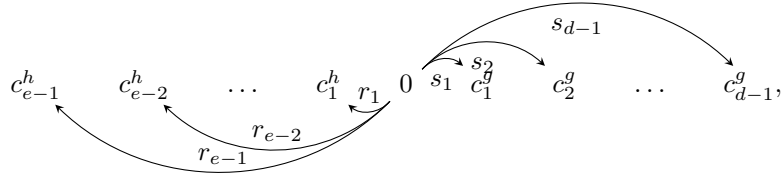

Consider now a critical value $c^g_j$ of $g$ with critical point $p_j$ and pick some $b'\in \mathbb{R}$, real value of $g$, near enough to $c^g_j$. As argued in \cite[\S 2]{LG1}, it follows that we may chose $b'$ such that the  real curve $x\mapsto (x,g(x))$ contains two points converging to $(p_j, c^g_j)$ as $b'$ goes to $c^g_j$. Let us denote them as $x_j$ and $x_{j+1}$. The 0-dimensional vanishing cycle $\sigma_j\in H_0(g^{-1}(b'))$ is defined as the formal sum $\sigma_j=x_{j+1}-x_{j}$. This cycle can be transported along a path from $b'$ to $0$ without monodromy to a cycle in $H_0(g^{-1}(0))$, which we keep denoting by $\sigma_j$. Analogously, 0-dimensional vanishing cycles $\gamma_{i}\in H_0(h^{-1}(0))$ can be defined by using instead the polynomial $h$. Besides, the formal sums above can be chosen so that the intersection between two 0-dimensional vanishing cycles is always $0$ or $-1$.
  
Let $\lambda=s_jr_i^{-1}$ be a path starting at $c^h_i$ and ending at $c^g_j$. The \emph{join cycle} $\gamma_i*\sigma_j$ is defined to be 
\begin{equation*}
	\gamma_i*\sigma_j:=\bigcup_{t\in[0,1]} \gamma_{i }(\lambda_t)\times \sigma_{j}( \lambda_t),
\end{equation*}  
where $\gamma_{i}(\lambda_t)\in  H_0(h^{-1}(\lambda_t))$ and $\sigma_{j }(\lambda_t)\in H_0(g^{-1}(\lambda_t))$. It holds that every join cycle $\gamma_i*\sigma_j$ is homeomorphic to a circle $S^1$, as Figure \ref{joinCycledegree2} below illustrates for the case $f(x,y)=x^2+y^2$. More importantly, they are 1-dimensional vanishing cycles and generate the whole homology $H_1(f^{-1}(0))$. The latter can be shown by using some technical but still classical results of Picard-Lefschetz theory, see \cite{La} and \cite[\S 7.5]{M}. If $\gamma_i*\sigma_j$ and $\gamma_{i'}*\sigma_{j'}$ are two join cycles along the paths  $\lambda:=s_jr_i^{-1}$ and $\lambda':=s_{j'}r_{i'}^{-1}$, respectively, then the intersection between them is determined by the formula
\begin{equation}
	\label{intersectionofjointcycles}
	\langle \gamma_i*\sigma_j, \gamma_{i'}*\sigma_{j'}\rangle=
	\left\{
	\begin{array}{lcl}
		\textnormal{sgn}(j'-j)\langle \sigma_j, \sigma_{j'}\rangle &\text{ if }\hspace{2mm}i=i'\text{ and }j\neq j'\\
		\textnormal{sgn}(i'-i)\langle \gamma_i, \gamma_{i'}\rangle &\text{ if }\hspace{2mm}j=j'\text{ and }i\neq i'\\
		\textnormal{sgn}(i'-i)\langle \gamma_i, \gamma_{i'}\rangle \langle \sigma_j, \sigma_{j'}\rangle &\hspace{3mm}\text{ if }\hspace{2mm} (i'-i)(j'-j)>0\\
		\hspace{2cm}0 &\hspace{4mm}\text{ if }\hspace{2mm} (i'-i)(j'-j)< 0.
	\end{array}
	\right.
\end{equation}

Due to our purposes, in what follows those join cycles are the only vanishing cycles we shall be considering. For specific details the reader is recommended to visit \cite[\S 7.10]{M} and \cite{LG1}.

\begin{center}
	\includegraphics[height=6cm]{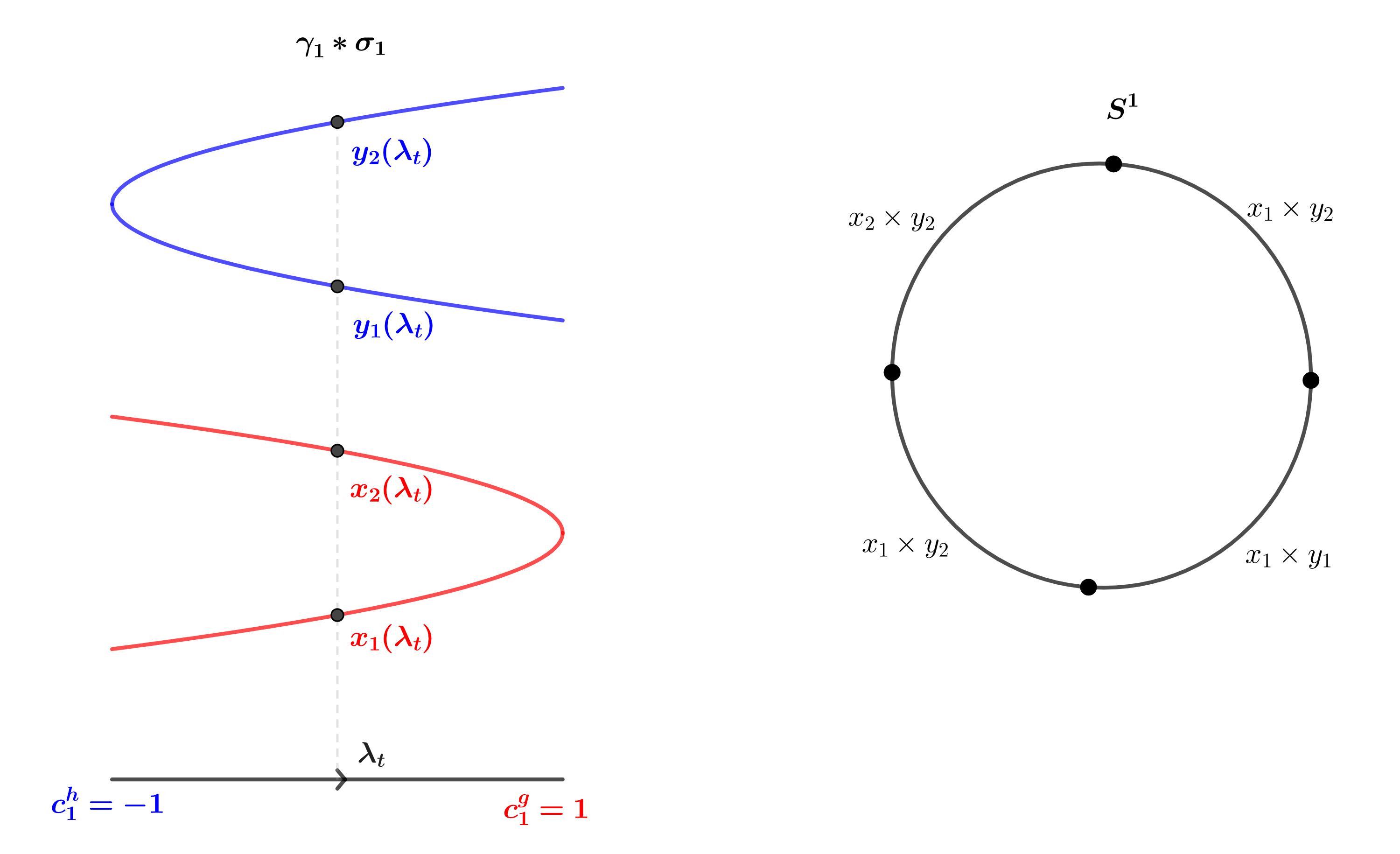}
	\captionof{figure}{\footnotesize Join cycle $\gamma_1\ast \sigma_1$ as $S^1$ for $f(x,y)=x^2+y^2$}
	\label{joinCycledegree2}
\end{center}

There is a combinatorial way of representing the intersection form, see \cite{AC, AGV}.

\begin{definition}
 The \emph{Dynkin diagram} of $f(x,y)=g(x)+h(y)$ is a 
 directed graph where the vertices correspond to the vanishing cycles in a regular fiber and the edges are determined by the possible intersections between the vanishing cycles. The vertices $\delta_i$ and $\delta_j$ are joined by an edge with multiplicity $\vert\langle \delta_i, \delta_j\rangle\vert$. If $\langle \delta_i, \delta_j\rangle>0$ then the edge points from $\delta_i$ to $\delta_j$.
\end{definition}

It is clear from the very definition that a Dynkin diagram is closely related to the intersection matrix $\Psi=(\langle \delta_i, \delta_j\rangle)$. Also, it is important to mention that we can also relate the vertices in a Dynkin diagram with the critical values associated to the vanishing cycles. In consequence, from the previous choice of paths $r_i$ and $s_j$ it follows that there are rules in the Dynkin diagram for $f(x,y)=g(x)+h(y)$ which establish the possibilities to relate the critical values. Firstly, the Dynkin diagram can be thought of as a two dimensional arrangement where the rows contain the critical values $c^h_i+c^g_j$ for a fixed $i$ and $j=1,\cdots , d-1$. Therefore, if two critical values of $f(x, y)$ in the same row of its Dynkin diagram agree then for the columns of these critical values there are equalities in the rows. Secondly, if $c^h_i+c^g_j=c^h_k+c^g_l$ with $i <k$ and  $j>l$ then $c^h_i=c^h_k$ and $c^g_j=c^g_l$. Similarly, the Dynkin diagram in dimension $0$ consists of vertices which correspond to the vanishing cycles associated to a polynomial and dashed edges representing an intersection of $-1$, as in this case the edges do not have direction. Visit \cite[\S 2]{LG1}.

Let us illustrate the previous notions with a concrete example.
\begin{example} 
	\label{ejemplo:g+h}
Set $g(x)=(x+3)(x+2)(x+1)(x-1)(x-2)(x-4)$ 
	and $h(y)=(3-y)(y-1)(y+1)(y+2)$. In dimension $0$ the Dynkin diagrams for $g$ and $h$ are respectively given by
	\begin{center}
		\begin{tikzpicture}           
			\matrix (m) [matrix of math nodes, row sep=0.5em,
			column sep=0.7em]{
				\sigma_3&\sigma_4 &\sigma_2 & \sigma_5&\sigma_1\\
			};			
			\path[-stealth]
			(m-1-1) edge [-, densely dashed]   (m-1-2)
			(m-1-2)	edge [-, densely dashed]	(m-1-3)
			(m-1-3)	edge [-, densely dashed]	(m-1-4)
			(m-1-4)	edge [-, densely dashed]	(m-1-5);
		\end{tikzpicture} and 
		\begin{tikzpicture}           
			\matrix (m) [matrix of math nodes, row sep=0.7em,
			column sep=0.7em]{
				\gamma_2&\gamma_3 &\gamma_1.\\
			};			
			\path[-stealth]
			(m-1-1) edge [-, densely dashed]   (m-1-2)
			(m-1-2)	edge [-, densely dashed]	(m-1-3)
			;
		\end{tikzpicture}
	\end{center}

The dashed lines above are classically used to indicate that the intersection number of two vanishing cycles equals $-1$. By using Formula \eqref{intersectionofjointcycles} we deduce that that Dynkin diagram for $f(x,y)=g(x)+h(y)$ can be depicted as
		\begin{footnotesize}
			\begin{equation}
			\label{Dynkindiagramgenerald}
			\begin{tikzpicture}           
			\matrix (m) [matrix of math nodes, row sep=1em,
			column sep=1em]{
				\gamma_2*\sigma_3&\gamma_2*\sigma_4&\gamma_2*\sigma_2&\gamma_2*\sigma_5&\gamma_2*\sigma_1\\
				\gamma_3*\sigma_3&\gamma_3*\sigma_4&\gamma_3*\sigma_2&\gamma_3*\sigma_5&\gamma_3*\sigma_1\\
				\gamma_1*\sigma_3&\gamma_1*\sigma_4&\gamma_1*\sigma_2&\gamma_1*\sigma_5&\gamma_1*\sigma_1.\\
			};			
			\path[-stealth]
			(m-2-1) edge (m-1-1)
			(m-2-1)	edge (m-3-1)
			(m-2-2) edge (m-1-2)
			(m-2-2)	edge (m-3-2)
			(m-2-3) edge (m-1-3)
			(m-2-3)	edge (m-3-3)
			(m-2-4) edge (m-1-4)
			(m-2-4)	edge (m-3-4)
			(m-2-5) edge (m-1-5)
			(m-2-5)	edge (m-3-5)
			
			(m-1-2) edge (m-1-1)
			(m-1-2) edge (m-1-3)
			(m-1-4) edge (m-1-3)
			(m-1-4) edge (m-1-5)		
			(m-2-2) edge (m-2-1)
			(m-2-2) edge (m-2-3)
			(m-2-4) edge (m-2-3)
			(m-2-4) edge (m-2-5)
			(m-3-2) edge (m-3-1)
			(m-3-2) edge (m-3-3)
			(m-3-4) edge (m-3-3)
			(m-3-4) edge (m-3-5)

			(m-1-1) edge (m-2-2)
			(m-3-1) edge (m-2-2)
			(m-1-3) edge (m-2-2)
			(m-3-3) edge (m-2-2)
			(m-1-3) edge (m-2-4)
			(m-3-3) edge (m-2-4)
			(m-1-5) edge (m-2-4)
			(m-3-5) edge (m-2-4)		
			;
			\end{tikzpicture}
			\end{equation}
		\end{footnotesize}
We index the join cycles starting with the cycle at the top left of the Dynkin diagram and then continuing with the cycles in the same column from top to bottom. This indexing process is applied to the next column and so on. In this particular case such an indexing is given as $\{\delta_1,\ldots\delta_{15}\}=\{\gamma_2*\sigma_3,\gamma_3*\sigma_3,\gamma_1*\sigma_3,\gamma_2*\sigma_4,\cdots ,\gamma_2*\sigma_1,\gamma_3*\sigma_1,\gamma_1*\sigma_1\}$. Thus, the intersection matrix in the ordered basis $\delta_1,\ldots \delta_{12}$ is
\begin{footnotesize}
	$${\tiny \Psi=\left(\begin{array} {@{}*{15}{c}@{}}
		0& -1&  0& -1&  1&  0&  0&  0&  0&  0&  0&  0&  0&  0&  0\\
		1&  0&  1&  0& -1&  0&  0&  0&  0&  0&  0&  0&  0&  0&  0\\
		0& -1&  0&  0&  1& -1&  0&  0&  0&  0&  0&  0&  0&  0&  0\\
		1&  0&  0&  0& -1&  0&  1&  0&  0&  0&  0&  0&  0&  0&  0\\
		-1&  1& -1&  1&  0&  1& -1&  1& -1&  0&  0&  0&  0&  0&  0\\
		0&  0&  1&  0& -1&  0&  0&  0&  1&  0&  0&  0&  0&  0&  0\\
		0&  0&  0& -1&  1&  0&  0& -1&  0& -1&  1&  0&  0&  0&  0\\
		0&  0&  0&  0& -1&  0&  1&  0&  1&  0& -1&  0&  0&  0&  0\\
		0&  0&  0&  0&  1& -1&  0& -1&  0&  0&  1& -1&  0&  0&  0\\
		0&  0&  0&  0&  0&  0&  1&  0&  0&  0& -1&  0&  1&  0&  0\\
		0&  0&  0&  0&  0&  0& -1&  1& -1&  1&  0&  1& -1&  1& -1\\
		0&  0&  0&  0&  0&  0&  0&  0&  1&  0& -1&  0&  0&  0&  1\\
		0&  0&  0&  0&  0&  0&  0&  0&  0& -1&  1&  0&  0& -1&  0\\
		0&  0&  0&  0&  0&  0&  0&  0&  0&  0& -1&  0&  1&  0&  1\\
		0&  0&  0&  0&  0&  0&  0&  0&  0&  0&  1& -1&  0& -1&  0
		\end{array}\right).}$$
\end{footnotesize}
\end{example}

\begin{remark}
	The Dynkin diagram of $-f$ can be obtained by changing the direction of the edges in the Dynkin diagram $f$, so that its intersection matrix is given by minus the intersection matrix of $f$. The horizontal and vertical edges in the Dynkin diagram of $f$ can be determined by the 0-dimensional Dynkin diagrams of $g$ and $h$, with orientation according to Formula \eqref{intersectionofjointcycles}. Moreover, the diagonal edges in the Dynkin diagram are determined by the orientations of the horizontal and vertical edges, namely, each diagonal edge forms a well-oriented triangle with the other two edges. Therefore, given the integers numbers $d=\deg(g)$ and $e=\deg(h)$ there are only two possibilities for the Dynkin diagrams of $f$ up to a sign in the intersection matrix. Indeed, they are $f=g+h$ and $f=g-h$. 
\end{remark}

Let us now focus on studying the monodromy problem for polynomials $f(x,y) = g(x)+h(y)$. We want to compute the subspaces of $H_1(f^{-1}(0))$ generated by the orbits of the join cycles under the monodromy action. This shall allow us to characterize the cases where those subspaces are not the whole space $H_1(f^{-1}(0))$. We start by considering the case where we only have one critical value, so that we suppose $f_0(x,y)=x^d+y^e$. By the Picard-Lefschetz formula it follows that to study the monodromy problem associated to $f_0$ is equivalent to studying the so-called \emph{Krylov subspaces} $\text{span}\{v,\Psi v,\Psi^2 v,\cdots, \Psi^{N-1}v\}$ for the intersection matrix $\Psi$ and each vanishing cycle $v$, where $N=(d-1)(e-1)$. These subspaces can be computed numerically for several values of $d$ and $e$. Nevertheless, in order to avoid numerical errors when computing powers of the matrix $\Psi$ we get the Krylov subspaces by noting the following key fact. Observe that the matrix $\Psi$ is diagonalizable (it is skew-symmetric), so we can write $v=\sum_j r_j u_j$ where the $u_j's$ are the eigenvectors of $\Psi$. Let $\lambda_j$ denote the eigenvalue of $u_j$ and set $\Psi^lv=\sum_{j=1}^{N} r_j \lambda_j^l u_j$.
Hence, 
 $$\begin{pmatrix}
	v& \Psi v& \Psi^2v&\cdots& \Psi^{N-1}v
\end{pmatrix}=\begin{pmatrix}
	r_1 u_1&r_2u_2&\cdots&r_Nu_n
\end{pmatrix}
\begin{pmatrix}
	1&\lambda_1&\lambda_1^2&\ldots&\lambda_1^{N-1}\\
	1&\lambda_2&\lambda_2^2&\ldots&\lambda_2^{N-1}\\
	\vdots&\vdots&\vdots&&\vdots\\
	1&\lambda_N&\lambda_N^2&\ldots&\lambda_N^{N-1}\\
\end{pmatrix}.$$

Note that the determinant of the Vandermonde matrix in the right hand side equals $\prod_{i<j}(\lambda_j-\lambda_i)$. Thus, if the eigenvalues are different, then the Krylov subspace of $v$ is the span of the vectors $u_l$ for which $r_l\neq 0$. In consequence, we can determine these subspaces by computing the eigenvalues and eigenvectors of $\Psi$ instead of computing  powers of $\Psi$, compare \cite[\S 3]{LG1}.
\begin{remark}
The previous notations give rise to a simpler indexing for the 1-dimensional vanishing cycles in the Dynkin diagram of $f$, namely, the one given by using the standard matrix indexing. For instance, the Dynkin diagram \eqref{Dynkindiagramgenerald} can be also depicted as
\begin{footnotesize}
	\begin{equation}
	\label{Dynkindiagramgenerald2}
	\begin{tikzpicture}           
	\matrix (m) [matrix of math nodes, row sep=1em,
	column sep=1em]{
		v_{1,1}&v_{1,2}&v_{1,3}&v_{1,4}&v_{1,5}\\
		v_{2,1}&v_{2,2}&v_{2,3}&v_{2,4}&v_{2,5}\\
		v_{3,1}&v_{3,2}&v_{3,3}&v_{3,4}&v_{3,5}\\
	};			
	\path[-stealth]
	(m-2-1) edge (m-1-1)
	(m-2-1)	edge (m-3-1)
	(m-2-2) edge (m-1-2)
	(m-2-2)	edge (m-3-2)
	(m-2-3) edge (m-1-3)
	(m-2-3)	edge (m-3-3)
	(m-2-4) edge (m-1-4)
	(m-2-4)	edge (m-3-4)
	(m-2-5) edge (m-1-5)
	(m-2-5)	edge (m-3-5)
	
	(m-1-2) edge (m-1-1)
	(m-1-2) edge (m-1-3)
	(m-1-4) edge (m-1-3)
	(m-1-4) edge (m-1-5)		
	(m-2-2) edge (m-2-1)
	(m-2-2) edge (m-2-3)
	(m-2-4) edge (m-2-3)
	(m-2-4) edge (m-2-5)
	(m-3-2) edge (m-3-1)
	(m-3-2) edge (m-3-3)
	(m-3-4) edge (m-3-3)
	(m-3-4) edge (m-3-5)

	(m-1-1) edge (m-2-2)
	(m-3-1) edge (m-2-2)
	(m-1-3) edge (m-2-2)
	(m-3-3) edge (m-2-2)
	(m-1-3) edge (m-2-4)
	(m-3-3) edge (m-2-4)
	(m-1-5) edge (m-2-4)
	(m-3-5) edge (m-2-4)		
	;
	\end{tikzpicture} \quad \leftrightsquigarrow \quad \begin{tikzpicture}           
	\matrix (m) [matrix of math nodes, row sep=1em,
	column sep=1em]{
		c_{1,1}&c_{1,2}&c_{1,3}&c_{1,4}&c_{1,5}\\
		c_{2,1}&c_{2,2}&c_{2,3}&c_{2,4}&c_{2,5}\\
		c_{3,1}&c_{3,2}&c_{3,3}&c_{3,4}&c_{3,5},\\
	};			
	\path[-stealth]
	(m-2-1) edge (m-1-1)
	(m-2-1)	edge (m-3-1)
	(m-2-2) edge (m-1-2)
	(m-2-2)	edge (m-3-2)
	(m-2-3) edge (m-1-3)
	(m-2-3)	edge (m-3-3)
	(m-2-4) edge (m-1-4)
	(m-2-4)	edge (m-3-4)
	(m-2-5) edge (m-1-5)
	(m-2-5)	edge (m-3-5)
	
	(m-1-2) edge (m-1-1)
	(m-1-2) edge (m-1-3)
	(m-1-4) edge (m-1-3)
	(m-1-4) edge (m-1-5)		
	(m-2-2) edge (m-2-1)
	(m-2-2) edge (m-2-3)
	(m-2-4) edge (m-2-3)
	(m-2-4) edge (m-2-5)
	(m-3-2) edge (m-3-1)
	(m-3-2) edge (m-3-3)
	(m-3-4) edge (m-3-3)
	(m-3-4) edge (m-3-5)

	(m-1-1) edge (m-2-2)
	(m-3-1) edge (m-2-2)
	(m-1-3) edge (m-2-2)
	(m-3-3) edge (m-2-2)
	(m-1-3) edge (m-2-4)
	(m-3-3) edge (m-2-4)
	(m-1-5) edge (m-2-4)
	(m-3-5) edge (m-2-4)		
	;
	\end{tikzpicture}
	\end{equation}
\end{footnotesize}
where $v_{i,j}$ is the vanishing cycle corresponding to the critical value $c_{i,j}$ which, in turn, corresponds to the position $(i,j)$ in the Dynkin diagram \eqref{Dynkindiagramgenerald}. In general, these two diagrams express the relation between $\pi_1(\mathbb{C}\setminus C)$ which is generated by loops around the critical values of $f$ and some of the automorphisms of $H_1(f^{-1}(0))$ associated to the intersection matrices. This can be realized once we compute the monodromy action by means of the Picard-Lefschetz formula.

\end{remark}

\begin{lemma}
	\label{label: prop orbits for 1 critical value}
Let $f_0(x,y)=x^d+y^e$ be a polynomial with $d\cdot e\leq 2500 $ and $\gcd(d,e)\leq 2$. Let $v_{i,j}$ be a vanishing cycle at the Dynkin diagram of $f_0(x,y)$. Then, the span of the orbit by the monodromy action on $v_{i,j}$ contains the following linear combinations:
	
	$$v_{i,mp}, \qquad v_{i,mp-k}+v_{i,mp+k},\qquad v_{i-1,mp-k}+v_{i-1,mp+k}+v_{i+1,mp-k}+v_{i+1,mp+k},$$  $$v_{nr,j},\qquad v_{nr-l,j}+v_{nr+l,j},\qquad v_{nr-l,j-1}+v_{nr+l,j-1}+v_{nr-l,j+1}+v_{nr+l,j+1},$$

where $p=\gcd(d,j)$ and $r=\gcd(e,i)$, $m=1,\ldots,  \frac{d}{p}-1$, $k=1,\ldots, p-1$, $n=1,\ldots,  \frac{e}{r}-1$, $l=1,\ldots, r-1$. 
\end{lemma}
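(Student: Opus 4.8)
My plan is to peel the statement down to a concrete fact about the eigenvectors of the intersection matrix and then to a trigonometric identity. First I would use the reduction explained just before the statement: for $f_0=x^d+y^e$ the span of the monodromy orbit of $v_{i,j}$ equals the Krylov subspace $\mathcal K(v_{i,j})=\mathrm{span}\{v_{i,j},\Psi v_{i,j},\dots,\Psi^{N-1}v_{i,j}\}$, where $\Psi$ is the intersection matrix of the join cycles and $N=(d-1)(e-1)$. Since $\Psi$ is skew-symmetric it is diagonalizable over $\mathbb C$; write $v_{i,j}=\sum_\ell r_\ell u_\ell$ in an eigenbasis. The single hard input---and the reason the statement is limited to $\gcd(d,e)\le 2$ and $d\cdot e\le 2500$---is that all eigenvalues of $\Psi$ are pairwise distinct, which is verified by computer for these pairs $(d,e)$. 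Granting it, the Vandermonde argument recalled above gives $\mathcal K(v_{i,j})=\mathrm{span}\{u_\ell:\ r_\ell\ne0\}$, so proving the lemma amounts to showing that each of the six families of vectors has a vanishing coefficient along every eigenvector that already has a vanishing coefficient in $v_{i,j}$.

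Next I would describe those eigenvectors using the join structure of $\Psi$ recorded in Formula \eqref{intersectionofjointcycles}. Because $v_{i,j}=\gamma_i*\sigma_j$ and the monodromy of $x^d+y^e$ is assembled from the type-$A_{d-1}$ and type-$A_{e-1}$ data of $x^d$ and $y^e$, one can take the eigenvectors of $\Psi$ to be explicit ``discrete-sine'' vectors $u^{(a,b)}$, indexed by $(a,b)\in\{1,\dots,e-1\}\times\{1,\dots,d-1\}$, with the coefficient of $v_{i,j}$ along $u^{(a,b)}$ equal to a nonzero multiple of $\sin(\pi a i/e)\,\sin(\pi b j/d)$; checking this shape of the eigen-coefficients from the Dynkin diagram \eqref{Dynkindiagramgenerald} is again a finite verification for each admissible $(d,e)$. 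This coefficient vanishes exactly when $e\mid ai$ or $d\mid bj$, i.e. when $a$ is a multiple of $e/r$ with $r=\gcd(e,i)$ or $b$ is a multiple of $d/p$ with $p=\gcd(d,j)$; hence $\mathcal K(v_{i,j})=\mathrm{span}\{u^{(a,b)}:\ (e/r)\nmid a,\ (d/p)\nmid b\}$, a subspace of dimension $(d-p)(e-r)$ that, under the join identification, is the tensor product of $\mathrm{span}\{\sigma_{mp},\,\sigma_{mp-k}+\sigma_{mp+k}\}$ with $\mathrm{span}\{\gamma_{nr},\,\gamma_{nr-l}+\gamma_{nr+l}\}$.

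Finally I would check the six families directly against this description. For $v_{i,mp}$ it is immediate that $(d/p)\mid b$ forces $d\mid b\,mp$, so $\sin(\pi b\,mp/d)=0$; for the symmetrized combinations one applies the product-to-sum identities $\sin(\theta(mp-k))+\sin(\theta(mp+k))=2\sin(\theta mp)\cos(\theta k)$ and $\sin(\tfrac{\pi a(i-1)}{e})+\sin(\tfrac{\pi a(i+1)}{e})=2\sin(\tfrac{\pi a i}{e})\cos(\tfrac{\pi a}{e})$, together with $p\mid j$ and $r\mid i$, to see that every such vector still kills all the missing eigenvectors. Equivalently, because $\sigma_j$, the symmetrization $\sigma_{j-1}+\sigma_{j+1}$, and likewise $\gamma_i$, $\gamma_{i-1}+\gamma_{i+1}$ all lie in the two factors above (using $p\mid j$, $r\mid i$), each listed vector is a tensor of an element of the $\sigma$-factor with an element of the $\gamma$-factor and therefore lies in $\mathcal K(v_{i,j})$.

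The hard part, I expect, will be twofold. The distinctness of the eigenvalues of $\Psi$ is not established in closed form: it is checked computationally, which is exactly what caps the result at $d\cdot e\le 2500$, and its failure explains the need for $\gcd(d,e)\le2$ (larger common factors already produce coincidences among the eigenvalues). And making the eigenvector description precise requires careful bookkeeping, since the column and row indices $j$ and $i$ of a join cycle are, via the distinguished paths of Figure \ref{setofpaths}, permutations of the labellings of the critical values of $g$ and $h$, and the diagonal edges of the Dynkin diagram must be tracked to pin down the $\sin(\pi a i/e)\sin(\pi b j/d)$ form of the eigen-coefficients; this is the step most naturally handled by direct computation for each admissible pair $(d,e)$.
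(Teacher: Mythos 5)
The paper's ``proof'' of this lemma is a one-line pointer to MATLAB code: the authors verify the six families directly, for every admissible pair $(d,e)$, by computing the eigenvalues and eigenvectors of $\Psi$ and reading off the supports of the relevant vectors in the eigenbasis. Your first paragraph therefore reproduces exactly the paper's setup (the Krylov/Vandermonde reduction and the computationally verified distinctness of eigenvalues are stated in the text immediately before the lemma, not in its proof), and to that extent your proposal and the paper coincide: both ultimately rest on a finite machine check per $(d,e)$, which is what caps the result at $d\cdot e\le 2500$. Where you genuinely diverge is in trying to replace the brute-force support computation by a closed-form description of the eigenvectors as discrete sines together with product-to-sum identities; nothing of this sort appears in the paper, and if it were carried out it would be a real improvement, since it would reduce the computer's role to checking distinctness of eigenvalues alone.

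That extra layer, however, has a concrete gap. Since $\Psi$ is real skew-symmetric, its eigenvectors are genuinely complex, and the coefficient of $v_{i,j}$ along an eigenvector $u^{(a,b)}$ is in general a \emph{complex} multiple of $\sin(\pi a i/e)\sin(\pi b j/d)$ whose phase depends on $(i,j)$ (already for $d=3$, $e=2$ the two coefficients along a fixed eigenvector differ by a factor of $i$). Knowing only that each coefficient is ``a nonzero multiple'' of the sine product suffices for the families $v_{i,mp}$ and $v_{nr,j}$, where one argues coefficient by coefficient, but it does not suffice for the symmetrized families: the identity $\sin(\theta(mp-k))+\sin(\theta(mp+k))=2\sin(\theta mp)\cos(\theta k)$ only annihilates the coefficient of $v_{i,mp-k}+v_{i,mp+k}$ along the excluded eigenvectors if the two prefactors agree on the pair $(mp-k,mp+k)$, and likewise for the four-term combinations in both indices. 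You neither state nor prove this matching of prefactors, and your own escape hatch---that the shape of the eigen-coefficients is ``again a finite verification for each admissible $(d,e)$''---collapses the argument back to exactly the computation the paper performs. So as written your proposal is not a completed alternative proof; it is the paper's computational proof plus an attractive but unestablished structural overlay whose missing ingredient is precisely the normalization of the eigenbasis compatible with the symmetries $j\mapsto 2mp-j$ and $i\mapsto 2nr-i$ of the Dynkin diagram \eqref{Dynkindiagramgenerald}.
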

\begin{proof}
This result can be proven by using a computing program. The reader can use the functions written in MATLAB, \textit{VanCycleSub} and \textit{Proof\_lemma\_2\_5}, as a numerical supplement of this proof \footnote{https://github.com/danfelmath01/Mondromy-and-tangential-problems-for-direct-sum-of-polynomials}.

\end{proof}

\begin{example}
The span of the orbit by monodromy action on the vanishing cycle $v_{2,2}$ at the Dynkin diagram  \eqref{Dynkindiagramgenerald2} contains linear combinations of the form $v_{2,2},v_{2,4},v_{2,1}+v_{2,3}, v_{2,3}+v_{2,5}, v_{1,2}+v_{3,2}, v_{1,1}+v_{1,3}+v_{3,1}+v_{3,3}$
\end{example}

From now on we assume that $f(x,y)=g(x)+h(y)$ additionally satisfies $d=\deg(g)$ and $e=\deg(h)$ with $d\cdot e\leq 2500$ and $\gcd(d,e)\leq 2$. After doing small deformations in the polynomials $f$ and $f_0=x^d+y^e$ we may conclude that their Dynkin diagrams are equivalent. Let $G_f$ and $G_0$ denote the monodromy groups associated to $f$ and $f_0$, respectively. By considering paths surrounding all critical values of $f$ it follows that for any vanishing cycle $v$ the orbits by these actions satisfy $G_0\cdot v\subset G_f\cdot v$. Therefore, our approach to study the subspaces of $H_1(f^{-1}(0))$, which are generated by the orbits of the monodromy action, strongly depends on using Lemma \ref{label: prop orbits for 1 critical value}.

Motivated by the notions studied in \cite{LG1} we introduce a notion that relates some behaviors of the orbits by the monodromy action with some combinatorial aspects derived form the Dynkin diagrams.

\begin{definition}
	The Dynkin diagram of $f(x,y)=g(x)+h(y)$ is said to have
	\emph{horizontal symmetry} if there exists an integer $p >1$ such that for any $j$ with $\text{gcd}(j,d)=p$
	the critical values of $f$ satisfy
	$$c_{i,j-k}= c_{i,j+k}\quad\text{where } k = 1,\ldots,p-1, \text{ and }i=1,\ldots, e-1.$$

 Accordingly, the vanishing cycles $v_{i,lr}$ with $l = 1,\cdots, \frac{d}{r}-1$ are called \emph{vanishing cycles with horizontal symmetry}. A notion of \emph{vertical symmetry} can be analogously defined.
\end{definition}

\begin{lemma} \label{label: prop horizontal symmetries vs orbits}
	Suppose that the Dynkin diagram associated to $f$ has no horizontal symmetry. Then, the subspace generated by the orbit through $v_{i,j}$ with respect to the monodromy action contains all the vanishing cycles $v_{i,k}$ for $k=1,\ldots, d-1$.
\end{lemma}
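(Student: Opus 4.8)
The plan is to reduce the statement to the combinatorial input provided by Lemma \ref{label: prop orbits for 1 critical value} together with the extra monodromy coming from the fact that $f$ has several distinct critical values, not just one. First I would fix a vanishing cycle $v_{i,j}$ and let $V \subset H_1(f^{-1}(0))$ be the subspace generated by its orbit under the full monodromy group $G_f$. Since $G_0 \cdot v_{i,j} \subset G_f \cdot v_{i,j}$, the span $V$ contains all the linear combinations listed in Lemma \ref{label: prop orbits for 1 critical value} for the index $(i,j)$; in particular, writing $p = \gcd(d,j)$, it contains $v_{i,mp}$ for $m = 1,\ldots,\frac{d}{p}-1$ and $v_{i,mp-k}+v_{i,mp+k}$ for $k=1,\ldots,p-1$. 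The goal is to upgrade these partial sums to the individual cycles $v_{i,k}$, $k=1,\ldots,d-1$, and this is precisely where the hypothesis ``no horizontal symmetry'' must be used.

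The key step is to exploit the Picard–Lefschetz formula around a suitable single critical value $c$ of $f$, breaking the ``symmetry'' between $v_{i,mp-k}$ and $v_{i,mp+k}$. If the Dynkin diagram has no horizontal symmetry, then for the integer $p>1$ (equal to $\gcd(j,d)$) there exist indices $i_0$ and $k_0 \in \{1,\ldots,p-1\}$ with $c_{i_0,j-k_0}\neq c_{i_0,j+k_0}$; more precisely, negating the definition, for every $p>1$ and every choice realizing $\gcd(j,d)=p$ the equality $c_{i,j-k}=c_{i,j+k}$ fails for some $i,k$. I would then pick a loop $\gamma$ around exactly one of the two unequal critical values, say the one carrying $v_{i_0,j-k_0}$ but not $v_{i_0,j+k_0}$, and apply $\textbf{Mon}_\gamma$ to the combination $w = v_{i,mp-k}+v_{i,mp+k}$ already in $V$. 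Using the Picard–Lefschetz formula and the fact that the monodromy of a cycle around a value $c$ only involves the vanishing cycles living in $f^{-1}(c)$, the two summands of $w$ transform differently: one is altered by a Picard–Lefschetz reflection, the other is fixed. Subtracting $w$ from its image isolates (a combination supported near) $v_{i,mp-k}$ alone, and iterating over the available $m$ and $k$ together with the already-available $v_{i,mp}$ recovers every $v_{i,k}$. One has to be careful that the reflection introduces only cycles that are themselves already known to lie in $V$ (e.g.\ cycles in the same row, which appear in the listed combinations) or can be cancelled against them; the two-dimensional ``row/column'' structure of the Dynkin diagram, recalled after the definition of the Dynkin diagram and in the Remark, is what guarantees this.

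The main obstacle I anticipate is controlling exactly which vanishing cycles enter when we apply $\textbf{Mon}_c$: a priori the Picard–Lefschetz sum $\textbf{Mon}_c(\delta) = \delta - \sum_k \langle \delta,\delta_k\rangle\,\delta_k$ can bring in cycles from other rows through the diagonal edges of the Dynkin diagram, and one needs the combinatorial rules on coincidences of critical values (``if two critical values in the same row agree, the whole columns agree'', and ``$c^h_i+c^g_j=c^h_k+c^g_l$ with $i<k,\ j>l$ forces $c^h_i=c^h_k$ and $c^g_j=c^g_l$'') to show that the unwanted terms either cancel in pairs or already belong to $V$. Concretely I would organize the argument as an induction on the ``block structure'' of equal critical values in row $i$: within each maximal block of equal values the $G_0$-orbit already gives the symmetric sums, and each failure of horizontal symmetry furnishes exactly one extra monodromy relation splitting one such block, so finitely many such splittings peel off the cycles one at a time until all of $v_{i,1},\ldots,v_{i,d-1}$ are obtained. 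A secondary technical point is to make sure that the loop we choose around a single critical value is realizable within the distinguished system of paths $(s_1,\ldots,s_{d-1},r_1,\ldots,r_{e-1})$ fixed earlier; this is standard in Picard–Lefschetz theory but should be stated explicitly.
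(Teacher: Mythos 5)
Your overall skeleton matches the paper's: start from the combinations supplied by Lemma \ref{label: prop orbits for 1 critical value}, use the failure of horizontal symmetry to locate a pair of distinct critical values $c_{i,j_0-k_0}\neq c_{i,j_0+k_0}$, and use Picard--Lefschetz around one of them to break the symmetric sums. However, the step where you actually break the symmetry does not work as described. You propose to apply $\textbf{Mon}_\gamma$, for $\gamma$ a loop around the critical value carrying $v_{i,j_0-k_0}$, to the combination $w=v_{i,j_0-k_0}+v_{i,j_0+k_0}$, claiming that one summand ``is altered by a Picard--Lefschetz reflection'' while the other is fixed. In fact \emph{both} summands are fixed: a $1$-dimensional vanishing cycle has self-intersection zero, so $\textbf{Mon}_{c}(\delta)=\delta-\langle\delta,\delta\rangle\,\delta=\delta$ when $\delta$ is the vanishing cycle at $c$, and the other summand $v_{i,j_0+k_0}$ neither vanishes at that critical value nor intersects $v_{i,j_0-k_0}$ (the columns $j_0-k_0$ and $j_0+k_0$ are not adjacent). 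Hence $\textbf{Mon}_\gamma(w)=w$ and nothing is isolated. The paper's proof avoids this by acting around $c_{i,j_0+k_0}$ on the \emph{neighbouring} combinations $v_{i,j_0-k_0-1}+v_{i,j_0+k_0-1}$ and $v_{i-1,j_0-k_0-1}+v_{i-1,j_0+k_0-1}+v_{i+1,j_0-k_0-1}+v_{i+1,j_0+k_0-1}$: the summand adjacent to column $j_0+k_0$ has intersection number $\pm1$ with $v_{i,j_0+k_0}$, so the reflection genuinely produces $v_{i,j_0+k_0}$ up to the diagonal contributions $v_{i\pm1,j_0+k_0}$, which are then cancelled against the four-term combination. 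This is the idea you gesture at in your ``main obstacle'' paragraph, but it must replace, not merely supplement, your splitting step.

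A second, smaller gap is the concluding iteration. Having produced $v_{i,j_0\pm k_0}$, you cannot simply ``iterate over the available $m$ and $k$'': the remaining pairs $v_{i,mp-k}+v_{i,mp+k}$ need not be splittable, since their critical values may well coincide, and the hypothesis only guarantees \emph{one} asymmetric pair per relevant $p$, not one per block. The paper instead feeds the newly obtained cycles back into Lemma \ref{label: prop orbits for 1 critical value} with their own $p'=\gcd(j_0\pm k_0,d)$: if some $p'=1$ the whole row is obtained at once, and otherwise one runs an induction on these gcd's whose terminal case produces cycles $v_{i,p_1}$, $v_{i,p_2}$ with $p_1<p_2<2p_1$ and from them $v_{i,2p_1-p_2}$. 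Your proposed induction on blocks of equal critical values does not engage with this arithmetic obstruction, so the termination of your procedure is not established.
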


\begin{proof}
	As consequence of Lemma \ref{label: prop orbits for 1 critical value} we get that the span of the orbit through $v_{i,j}$ contains the linear combinations $v_{i,mp}$, $v_{i,mp-k}+v_{i,mp+k}$ and $v_{i-1,mp-k}+v_{i-1,mp+k}+v_{i+1,mp-k}+v_{i+1,mp+k}$ where $p=\gcd(j,d)$. Since there is no horizontal symmetry there exists $j_0=m_0p$ such that $c_{i,j_0-k_0}\neq c_{i,j_0+k_0}$ for some $k_0\in \lbrace 1,\cdots, p-1\rbrace$. By considering the monodromy action on $v_{i,j_0-k_0-1}+v_{i,j_0+k_0-1}$ and $v_{i-1,j_0-k_0-1}+v_{i-1,j_0+k_0-1}+v_{i+1,j_0-k_0-1}+v_{i+1,j_0+k_0-1}$ around $c_{i,j_0+k_0}$ we obtain $v_{i,j_0+k_0}-v_{i-1,j_0+k_0}-v_{i+1,j_0+k_0}$ and $v_{i-1,j_0+k_0}+v_{i+1,j_0+k_0}$; $v_{i,j_0+k_0}-v_{i-1,j_0+k_0}-v_{i+1,j_0-k_0}$ and $v_{i-1,j_0+k_0}+v_{i+1,j_0-k_0}$; or $v_{i,j_0+k_0}$ directly.
	
	 In either case, we can produce the vanishing cycle $v_{i,j_0+k_0}$ and therefore $v_{i,j_0-k_0}$. If $\gcd(j_0+k_0,d)=1$ or $\gcd(j_0-k_0,d)=1$ then the results follows from Lemma \ref{label: prop orbits for 1 critical value}. Thus, by arguing as above inductively we obtain either a cycle $v_{i,j'}$ such that $\gcd(j',d)=1$ or else we produce vanishing cycles $v_{i,p_1}$ and $v_{i,p_2}$ where $p_1, p_2$ are the smallest numbers verifying $\gcd(p_i,d)\neq 1$ and $p_1<p_2<2p_1$. In the latter case, it is simple to check that these cycles produce the cycle $v_{i,2p_1-p_2}$.
\end{proof}
We are now in condition to state our first main result which clearly generalize, up to some degree restrictions,  the second main result proved in \cite{CM} for hyperelliptic curves and the first main result in \cite{LG1} for more general algebraic curves.

\begin{theorem}
	\label{MainThm1}
	Let $f(x,y)=g(x)+h(y)$ be a polynomial where $h$ and $g$ are polynomials with real coefficients having real critical points. Suppose that $d=\deg(g)$ and $e=\deg(h)$ with $d\cdot e\leq 2500$ and $\gcd(d,e)\leq 2$. If $\delta_t\subset f^{-1}(t)$ is a vanishing cycle then one of the following assertions holds true:
	\begin{enumerate}
		\item[1.] the orbit of $\delta_t$ by the monodromy action generates the homology $H_1(f^{-1}(t))$, or
		
		\item[2.] either the polynomial $g$ is decomposable as $g = g_2\circ g_1$ and $\pi_*(\delta_t)$ is homotopic to
		zero in $\{g_2(z)+h(y)= t\}$ where $\pi(x,y)=(g_1(x),y)=(z,y)$, or else the polynomial $h$ is decomposable as $h = h_2\circ h_1$ and $\pi_*(\delta_t)$ is homotopic to
		zero in $\{g(x)+h_2(z)= t\}$, where $\pi(x,y)=(x,h_1(y))=(x,z)$.
	\end{enumerate}
\end{theorem}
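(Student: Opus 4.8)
The plan is to reduce the theorem to the two lemmas already established, handling the horizontal and vertical directions separately and then combining them. First I would fix a vanishing cycle $\delta_t$; after a small deformation of $f$ we may identify $\delta_t$ with one of the join cycles $v_{i,j}$ in the Dynkin diagram, and by the inclusion $G_0\cdot v\subset G_f\cdot v$ noted above, everything produced from Lemma \ref{label: prop orbits for 1 critical value} is available inside the $f$-orbit. The dichotomy in the statement should match the dichotomy ``the Dynkin diagram of $f$ has horizontal/vertical symmetry or it does not'', so the first step is to show that \emph{absence of both symmetries forces case (1)}. Indeed, if there is no horizontal symmetry, Lemma \ref{label: prop horizontal symmetries vs orbits} gives all $v_{i,k}$, $k=1,\dots,d-1$, in the orbit span; applying the vertical half of Lemma \ref{label: prop orbits for 1 critical value} to each of these (or, if there is also no vertical symmetry, the vertical analogue of Lemma \ref{label: prop horizontal symmetries vs orbits}) propagates to an entire row and then, column by column, to the whole diagram, so the orbit spans $H_1(f^{-1}(t))$. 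One has to be slightly careful when $\gcd(d,e)=2$ and $v_{i,j}$ sits in a ``bad'' position, but the linear combinations $v_{i,mp-k}+v_{i,mp+k}$ together with the four-term combinations let one still bootstrap across, exactly as in the proof of Lemma \ref{label: prop horizontal symmetries vs orbits}.

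The second step is the translation between the combinatorial condition (existence of a horizontal symmetry with parameter $p$) and the geometric condition (decomposability $g=g_2\circ g_1$ with $\deg g_1=p$ and $\pi_*(\delta_t)$ nullhomotopic). Here I would argue that a horizontal symmetry of the Dynkin diagram — the coincidences $c_{i,j-k}=c_{i,j+k}$ for all $i$ whenever $\gcd(j,d)=p$ — says precisely that the critical values of $g$ are invariant, with multiplicity, under the deck transformations of a degree-$p$ map; by the standard description of when a polynomial in one variable factors through another (a Ritt-type / branched-cover argument, compare the analysis in \cite{LG1}), this is equivalent to $g=g_2\circ g_1$ with $\deg g_1=p$. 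Under such a factorization the map $\pi(x,y)=(g_1(x),y)$ realizes $f^{-1}(t)$ as a $p$-fold cover of $\{g_2(z)+h(y)=t\}$, and the vanishing cycles $v_{i,j}$ with $\gcd(j,d)=p$ (the ``cycles with horizontal symmetry'') are exactly those that are contained in a single sheet, i.e.\ that map to a nullhomotopic loop downstairs. So I must show: $\delta_t=v_{i,j}$ has its orbit failing to span $H_1$ \emph{only if} $\gcd(j,d)>1$ and the diagram has the corresponding symmetry — equivalently, only if $\pi_*(\delta_t)\simeq 0$. The forward direction is the contrapositive of Step 1 applied symmetrically; the reverse direction (if $\pi_*\delta_t\simeq 0$ then the orbit genuinely cannot be everything) follows because the monodromy of $f$ preserves the sub-local-system coming from the covering $\pi$, so the orbit of a cycle living in the kernel of $\pi_*$ stays in a proper invariant subspace.

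The third step is bookkeeping: Lemma \ref{label: prop horizontal symmetries vs orbits} is stated only for the no-horizontal-symmetry case, and I need the finer statement that \emph{if there is a horizontal symmetry with parameter $p$ but $\gcd(j,d)$ is not a multiple of $p$ (in particular if $\gcd(j,d)=1$), the orbit still contains a full row}, and likewise in the vertical direction, so that the only genuinely obstructed cycles are the ones pinned down in Step 2. This is where the hypotheses $d\cdot e\le 2500$ and $\gcd(d,e)\le 2$ are used through Lemma \ref{label: prop orbits for 1 critical value}: with $\gcd(d,e)\le 2$ the eigenvalues of $\Psi$ are distinct, the Krylov subspaces are computed as spans of eigenvectors with nonzero coordinate, and the listed linear combinations are exactly what the computer check certifies. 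I would then assemble: (i) if the diagram has no horizontal and no vertical symmetry, case (1) holds for every $\delta_t$; (ii) if it has a horizontal symmetry with parameter $p$ and $\gcd(j,d)$ is coprime to $p$, one recovers a row and then everything, so again (1); (iii) the remaining cycles are precisely those with $\pi_*(\delta_t)\simeq 0$ for the relevant factorization, giving case (2); and symmetrically with the roles of $g$ and $h$ exchanged.

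The main obstacle I expect is Step 2, specifically the clean identification ``horizontal symmetry of the Dynkin diagram $\iff$ $g$ factors as $g_2\circ g_1$ with $\pi_*\delta_t$ nullhomotopic.'' Getting the coincidence pattern of the critical values $c_{i,j}=c^h_i+c^g_j$ to say exactly ``$g$ is pulled back along a degree-$p$ cover'' — and not merely ``$g$ has some accidental repeated critical values'' — requires care about which coincidences the symmetry forces and about the real-critical-points hypothesis, and then one must check that the vanishing cycles with horizontal symmetry are the $\pi$-fibrewise-trivial ones rather than merely a subspace containing them. The proof in \cite{LG1} for general curves presumably contains the prototype of this argument, so I would follow that template and adapt it to the product situation, using the row/column rules for the Dynkin diagram of $g(x)+h(y)$ recalled after the definition of the Dynkin diagram; the rest of the argument is then a matter of organizing the two lemmas and the covering-space invariance of the monodromy.
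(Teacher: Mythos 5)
Your Step 1 (no horizontal and no vertical symmetry forces case (1)) matches the paper's argument exactly: Lemma \ref{label: prop horizontal symmetries vs orbits} fills a row, its vertical analogue fills the columns, and the orbit spans $H_1(f^{-1}(t))$. The divergence, and the genuine gap, is in your Step 2, the implication ``horizontal symmetry of the Dynkin diagram $\Rightarrow$ $g=g_2\circ g_1$ with $\pi_*(\delta_t)$ nullhomotopic.'' You propose to prove this directly by observing that the coincidence pattern $c_{i,j-k}=c_{i,j+k}$ ``says precisely that the critical values of $g$ are invariant, with multiplicity, under the deck transformations of a degree-$p$ map.'' As stated this is circular: there is no degree-$p$ map yet — producing the factorization $g=g_2\circ g_1$ from a pattern of equalities among the critical values of $g$ is exactly the hard content of the step, and a pattern of repeated critical values does not by itself yield a decomposition. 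The paper's own Remark after the theorem warns that the L\"uroth-type argument underlying this implication is only available in the hyperelliptic situation $g(x)+y^2$, so the ``Ritt-type / branched-cover argument'' you defer to is not something you can simply adapt from \cite{LG1} for general $h$; you have correctly located the main obstacle but have not supplied the idea that overcomes it.

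The paper's resolution is a reduction you are missing: since horizontal symmetry of the Dynkin diagram depends only on $g$, the auxiliary polynomial $\tilde f(x,y)=y^2+g(x)$ has the same horizontal symmetry, the cycle $\delta_t$ corresponds to a cycle $\tilde\delta_t$ of $\tilde f$ whose monodromy orbit likewise fails to span $H_1(\tilde f^{-1}(t))$, and then Theorem 1.6 of \cite{CM} (the already-proven hyperelliptic case) delivers both the factorization $g=g_2\circ g_1$ and the nullhomotopy of $\pi_*(\tilde\delta_t)$; the join-cycle construction transports the nullhomotopy back to $\{g_2(z)+h(y)=t\}$. Two smaller remarks: your ``reverse direction'' in Step 2 (a cycle in $\ker\pi_*$ cannot have full orbit) is not needed for the theorem as stated, which is only a disjunction; and your Step 3 bookkeeping for cycles at positions with $\gcd(j,d)$ coprime to the symmetry parameter is in the right spirit — the paper is itself terse on this point — but it does not compensate for the missing decomposability argument.
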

\begin{proof}
 
We start by claiming that if the orbit through $\delta_t$ with respect to the monodromy action does not generate the homology $H_1(f^{-1}(t))$ then the Dynkin diagram of $f$ has horizontal or vertical symmetry. Indeed, let $\delta_t$ be denoted by $v_{i,j}$ and let $V\subset H_1(f^{-1}(t))$ be the subspace generated by the monodromy orbit through $v_{i,j}$. From Lemma \ref{label: prop horizontal symmetries vs orbits} we know that if the Dynkin diagram of $f$ has no horizontal symmetry then the vanishing cycles in the same row containing $v_{i,j}$ belongs to $V$. If, in addition, such a diagram has no vertical symmetry then we get that each cycle in the row containing $v_{i,j}$ produces all the cycles belonging to its respective column. Therefore, if the Dynkin diagram of $f$ has neither horizontal symmetry nor vertical symmetry then we deduce that $V=H_1(f^{-1}(t))$. This fact implies that the vanishing cycles $\delta_{t}$ of $f$ for which their orbit by the monodromy action generate subspaces $W$ with $W\neq H_1(f^{-1}(t))$ are precisely those vanishing cycles having horizontal symmetry or vertical symmetry, provided the Dynkin diagram of $f$ has horizontal symmetry or vertical symmetry, respectively. 

Let us suppose that the orbit through $\delta_t$ with respect to the monodromy action does not generate the homology $H_1(f^{-1}(t))$. That is, $\delta_t$ is either a vanishing cycle with horizontal or vertical symmetry. Assume that $\delta_t$ has horizontal symmetry. It follows that whether the Dynkin diagram of $f$ has horizontal symmetry depends only on $g$, so that $\tilde f:=y^2+g(x)$ has horizontal symmetry as well. Furthermore, the cycle $\delta_t$ is in correspondence with a vanishing cycle $\tilde \delta_t$ of $\tilde f$ having horizontal symmetry. And so it is simple to see that the span of the orbit through  $\tilde \delta_t$ does not generate $H_1(\tilde f^{-1}(t))$. Therefore, by using Theorem 1.6 in \cite{CM} it follows that $g$ is decomposable as $g=g_2\circ g_1$. Moreover, we get that $\pi_*(\tilde \delta_t)$ is null-homotopic in $\{y^2+g_2(z)=t\}$ where $\pi(x,y)=(g_1(x),y)=(z,y)$. Note that from the definition of join cycle we deduce that $\pi_*(\delta_t)$ is also null-homotopic in $\{h(y)+g_2(z)=t\}$.

It is clear that if $\delta_t$ has vertical symmetry then we obtain that a similar decomposition for $h$ satisfying the null-homotopic property above turns out to be equivalent to saying that $f$ has vertical symmetry. So, the result follow as desired.
\end{proof}

\begin{remark}
The proof of Theorem 1.6 in \cite{CM} mainly uses L\"uroth's theorem for polynomials in one variable, which is closely related to the case  $g(x)+y^2$. When $\deg(h(y))>2$ there is no analogous result to L\"uroth's theorem we may use to bring a similar proof. However, as it was evidenced above, the combinatorial and linear results provided in Lemmas \ref{label: prop orbits for 1 critical value} and \ref{label: prop horizontal symmetries vs orbits} allow us to prove a result which generalizes the mentioned theorem in \cite{CM}.
\end{remark}

\section{The tangential-center problem}
In this section we provide an application of Theorem \ref{MainThm1} which is related to the \emph{weak 16th Hilbert problem}. Such a problem asks to bound the number of real limit cycles in the system $df+\epsilon \omega=0$ for small $\epsilon$, where $f$ is a polynomial and $\omega$ is a polynomial $1$-form on $\mathbb{C}^2$, visit \cite{A,SY}. This is closely linked to the study of the zeros of the Abelian integral $I(t)=\int_{\delta_t}\omega$ where $\delta_t$ is a 1-parameter family of cycles lying inside the fiber $f^{-1}(t)$. It follows that the limit cycles of the system $df+\epsilon \omega=0$ are  related to the zeros of $I(t)$ for generic values of $t$, provided this integral does not vanish identically. Thus, the weak 16th Hilbert problem may be rephrased in tangential terms by asking to bound the number of zeros of $I(t)$ in terms of the algebraic degrees of $f$ and $\omega$. Note that if the Abelian integral above vanishes identically then no information about the limit cycles of the system involved can be gained. Nevertheless, such a vanishing condition is related to the tangential version of the classical center–focus problem which asks to characterize the centers of planar polynomial vector fields \cite{Ly,P}, so that it is also of independent interest. For instance, Ilyashenko in \cite{I} characterized the conditions under which $I(t)$ vanishes identically when $f$ is generic. It was done by proving that the monodromy acts transitively on the first homology group of the generic fiber. Hence, in this case the vanishing of $I(t)$ implies the vanishing of the Abelian integral along all cycles in $H_1(f^{-1}(t))$, thus obtaining that $\omega$ is relatively exact. That is, it satisfies the so-called \emph{Condition $(*)$} which allows to give formulas for the first nonzero derivative of a period function, see \cite{F}.

Our aim below is to describe the implications of having that $I(t)$ vanishes identically for the cases $f(x,y)=g(x)+h(y)$ where $h$ and $g$ are polynomials with real coefficients having real critical points and satisfying $\deg(g)\cdot\deg(h)\leq 2500 $ and $\gcd(\deg(g),\deg(h))\leq 2$. In order to do so we need to take care first of the following technical results.

\begin{lemma}\label{Corollary1}
	Suppose that the polynomial $g$ is decomposable as $g = g_2\circ g_1$ and $\pi_*(\delta_t)$ is homotopic to
	zero in $\{g_2(z)+h(y)= t\}$ where $\pi(x,y)=(g_1(x),y)=(z,y)$. Assume that $\deg(g_1)=a$ and $\delta_t$ is at the position $\frac{dm}{a}$ with $m\in\{1,\ldots, a-1\}$. Set $F(z,y)=g_2(z)+h(y)$. Then, the induced group homomorphism $\pi_\ast: H_1(f^{-1}(t))\to H_1(F^{-1}(t))$ is surjective and  its kernel is generated by the orbit through $\delta_t$ with respect to the monodromy action.
\end{lemma}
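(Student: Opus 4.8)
The plan is to analyze the map $\pi_*$ via the explicit description of vanishing cycles as join cycles together with the combinatorial structure of the Dynkin diagram that underlies the hypothesis of horizontal symmetry. First I would set up the geometry: since $g = g_2 \circ g_1$ with $\deg(g_1) = a$, the critical values of $g$ split into those coming from critical points of $g_1$ (giving rise to the ``symmetry blocks'') and those coming from critical points of $g_2$ pulled back along $g_1$; in the Dynkin diagram of $f$ this is exactly the horizontal symmetry, where columns grouped by the value of $g_1$ collapse onto the columns of the Dynkin diagram of $F = g_2 + h$. Concretely, the map $\pi(x,y) = (g_1(x), y)$ sends a regular fiber $f^{-1}(t) = \{g_1(x) \text{ root of } g_2(\cdot) + h(y) = t\}$ onto $F^{-1}(t)$ as a branched covering of degree $a$ in the $x$-direction, and I would identify, fiber by fiber over the $y$-line, how the $0$-dimensional vanishing cycles $\sigma_j$ of $g$ push forward to those of $g_2$: the symmetry-block cycles (those at positions that are multiples of $d/a$, in the indexing of the statement) push forward to genuine $0$-cycles of $g_2$, while the cycles within a block push forward to zero. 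Taking joins with the $\gamma_i$ of $h$, this gives a clean formula for $\pi_*$ on the distinguished basis of join cycles.

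The second step is surjectivity. Since the join cycles $\gamma_i * \sigma_j$ with $j$ a multiple of $d/a$ span (after pushforward) the join cycles of $F$, and the latter generate $H_1(F^{-1}(t))$ by Picard–Lefschetz theory (as recalled in the excerpt), surjectivity of $\pi_*$ is immediate from the explicit pushforward formula. The more substantial third step is the identification of the kernel. One inclusion is the statement that the orbit through $\delta_t$ under the monodromy action lies in $\ker \pi_*$: here I would use that $\pi_*$ is equivariant for the monodromy actions — the loop around a critical value $c_{i,j}$ of $f$ maps to a loop around the corresponding critical value of $F$ (or to a trivial loop, if $c_{i,j}$ lies in a symmetry block and thus projects to a regular value of $F$) — combined with the hypothesis that $\pi_*(\delta_t) = 0$ to conclude that the whole orbit $G_f \cdot \delta_t$ is killed. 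For the reverse inclusion I would compare dimensions: $\dim H_1(f^{-1}(t)) - \dim H_1(F^{-1}(t))$ equals the number of vanishing cycles lost in passing from $f$ to $F$, which is $(a-1)(e-1) \cdot (\deg g_2 - 1)$-many... more precisely the count $(d-1)(e-1) - (\deg(g_2)-1)(e-1)$, and I would show this equals the dimension of the span of the orbit through $\delta_t$. This is where Theorem \ref{MainThm1} and Lemma \ref{label: prop orbits for 1 critical value} enter: $\delta_t$ is a vanishing cycle with horizontal symmetry at position $dm/a$, so by Lemma \ref{label: prop orbits for 1 critical value} (with $p = \gcd(d, dm/a) = d/a$, using $\gcd(m,a)=1$) its orbit already contains a large explicit family of linear combinations, and I would argue that within each horizontal symmetry block the orbit fills out precisely the ``antisymmetric'' part, which has exactly the right dimension.

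The main obstacle I anticipate is the dimension count for the kernel — showing that the orbit through $\delta_t$ is not merely contained in $\ker \pi_*$ but equals it. The containment is soft (equivariance plus the vanishing hypothesis), but equality requires knowing that the monodromy orbit of a single horizontally-symmetric cycle is as large as possible inside the ``symmetric-collapsing'' subspace. Lemma \ref{label: prop orbits for 1 critical value} gives the building blocks $v_{i,mp}$, $v_{i,mp-k}+v_{i,mp+k}$, etc., but one must combine them using monodromy around the non-symmetric critical values — much as in the proof of Lemma \ref{label: prop horizontal symmetries vs orbits} — to produce every antisymmetric combination $v_{i,mp-k} - v_{i,mp+k}$ and, more delicately, to verify that nothing outside $\ker\pi_*$ is produced (which again follows from equivariance). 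I would organize this as: (i) the orbit is contained in $\ker\pi_*$; (ii) $\dim \ker \pi_* = \dim H_1(f^{-1}(t)) - \dim H_1(F^{-1}(t))$, computed via Milnor numbers $(d-1)(e-1)$ and $(\deg(g_2)-1)(e-1)$; (iii) the orbit has at least this dimension, by exhibiting enough independent antisymmetric combinations using Lemma \ref{label: prop orbits for 1 critical value} row by row and the decomposition structure. A secondary technical point worth isolating is checking that the position hypothesis ``$\delta_t$ at position $dm/a$ with $\gcd(m,a) = 1$'' is exactly what makes $\gcd(d, dm/a) = d/a$, so that Lemma \ref{label: prop orbits for 1 critical value} applies with the symmetry period $p = d/a$ rather than a proper divisor of it.
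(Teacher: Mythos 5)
Your skeleton is the same as the paper's: reduce to the $0$-dimensional pushforward $H_0(g^{-1}(\cdot))\to H_0(g_2^{-1}(\cdot))$ via the join-cycle construction, get surjectivity from an explicit preimage of each vanishing cycle of $g_2$, and invoke Lemma \ref{label: prop orbits for 1 critical value} for the kernel (your soft equivariance argument for the inclusion $G_f\cdot\delta_t\subset\ker\pi_*$ is correct and is a direction the paper leaves implicit). But your explicit description of $\pi_*$ is inverted, and this is not a cosmetic slip. A vanishing cycle $\sigma_j$ of $g=g_2\circ g_1$ at a position $j=kd/a$ degenerates onto a critical point of $g_1$, so its two endpoints lie in the \emph{same} fiber of $g_1$ and $\pi_*(\sigma_j)=0$; it is the cycles at the remaining positions (your ``cycles within a block'') that map to $\pm$ a vanishing cycle of $g_2$. (Check $g=T_2\circ T_2$: the middle cycle $\sigma_2$, at position $d/a=2$, is killed, while $\sigma_1,\sigma_3$ map to $\mp(z_2-z_1)$.) Your version contradicts your own later use of the hypothesis $\pi_*(\delta_t)=0$ for $\delta_t$ at position $dm/a$, and it breaks your surjectivity step as written, since the cycles you nominate to span the image are exactly the ones that die.

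The second, more substantial gap is the reverse inclusion $\ker\pi_*\subset\mathrm{span}(G_f\cdot\delta_t)$, which you reduce to a dimension count but never carry out. Two issues. (i) The elements you propose to produce, the differences $v_{i,mp-k}-v_{i,mp+k}$, are generically \emph{not} in $\ker\pi_*$ (by the reflection symmetry $\pi_*(\sigma_{mp-k})=-\pi_*(\sigma_{mp+k})$, it is the sums that are killed), so they had better not appear in the orbit span; the relevant building blocks are the sums $v_{i,mp-k}+v_{i,mp+k}$ that Lemma \ref{label: prop orbits for 1 critical value} already supplies. (ii) Those sums together with the $v_{i,kd/a}$ account for only $(a-1)+(d-a)/2$ dimensions per row, whereas $\ker\pi_*$ contributes $d-d/a$ per row; these agree only when $a=2$. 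For $a\geq 3$ the kernel is spanned by \emph{all} within-fiber differences $(x_s^i,t)-(x_s^j,t)$, and the step that closes the argument in the paper --- rewriting a non-adjacent within-fiber difference as $((x_2^i,t)-(x_1^k,t))-((x_2^j,t)-(x_1^l,t))+((x_1^k,t)-(x_1^l,t))$, i.e.\ a difference of two cross-fiber vanishing cycles plus an adjacent within-fiber one, each term of which Lemma \ref{label: prop orbits for 1 critical value} places in the orbit span --- is absent from your plan, and neither the corrected dimension count nor the phrase ``antisymmetric part of each block'' substitutes for it. Your secondary observation is well taken: one does need $\gcd(m,a)=1$ to guarantee $\gcd(d,dm/a)=d/a$, a hypothesis the lemma as stated does not impose.
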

\begin{proof}
	From the very definition of the join cycles we get that the result shall follow by analyzing the 0-dimensional Dynkin diagram. So, let us consider a vanishing for $g_2(z)=r$ associated to a critical point $p$. This is given by a formal sum $(z_2,t)-(z_1,t)$ such that $g_2(z_2)=g_2(z_1)=t$. We may further assume that the points $z_1$ and $z_2$ are the closest ones to $p$. Let $g_1^{-1}(z_1)=\{x_1^1, x_1^2,\ldots, x_1^a\}$ and $g_1^{-1}(z_2)=\{x_2^1, x_2^2,\ldots, x_2^a\}$. Firstly, the 0-cycles for $g_2\circ g_1(x)=t$ which are defined by $v=(x_2^i,t)-(x_1^j,t)$ satisfy that $\pi_*(v)=(z_2,t)-(z_1,t)$. That is, $\pi_\ast$ is surjective. Secondly, we know that the kernel of $\pi_*$ is generated by the 0-cycles of the form $(x_1^i,t)-(x_1^j,t)$ and $(x_2^i,t)-(x_2^j,t)$. For points $x_1^i$ and $x_1^j$ (resp. $x_2^i$ and $x_2^j$) which are adjacent we obtain that the difference $(x_1^i,t)-(x_1^j,t)$ is a vanishing cycle of $g_2\circ g_1(x)=t$ at the position $\frac{kd}{a}$. The differences $(x_2^i,t)-(x_2^j,t)$ of points  $x_2^i$ and $x_2^j$ which are not adjacent can be rewritten as $((x_2^i,t)-(x_1^k,t))-((x_2^j,t)-(x_1^l,t))+((x_1^k,t)-(x_1^l,t))$, where each of the differences defines a vanishing cycle. Hence, from Lemma \ref{label: prop orbits for 1 critical value} we obtain that the cycles  $((x_2^i,t)-(x_1^k,t))-((x_2^j,t)-(x_1^l,t))$ and $((x_1^k,t)-(x_1^l,t))$ are in the span of the orbit through $\delta_t$.
\end{proof}

The following notion can be found in \cite[\S 7.3]{M}.

\begin{definition}\label{TamePolynomial}
Let $f$ be a polynomial in $\mathbb{C}[x,y]$ and $q$ be its highest degree homogeneous piece. We say that $f$ is \emph{tame} if the $\mathbb{C}$-module $V_q:=\frac{\mathbb{C}[x,y]}{\textnormal{jacob}(q)}$ is finitely generated. In this case, we refer to the rank of $V_q$ as the Milnor number of $f$.
\end{definition}

If $f$ is tame then the \emph{Petrov module} $H_f=\frac{\Omega^1}{d\Omega^0+df\wedge \Omega^0}$, where $\Omega^i$ stands for the polynomials forms, is finitely generated. Moreover, each $\omega \in H_f$ can be written as $\omega=\sum_{i=1}^\mu p_i(f)\omega_i$ where the $p_i's$ are polynomials and the forms $\omega_i's$ are defined by the condition $d\omega_i=g_i dx \wedge dy$ with $g_1,\ldots, g_{\mu}$ being a monomial basis of $\frac{\mathbb{C}[x,y]}{\langle f_x, f_y, \rangle}$, compare \cite{G}.

\begin{lemma}\label{KeyComputation}
Let $\pi:\mathbb{C}^2\to \mathbb{C}^2$ be a polynomial function given by $(x,y) \to (u(x),v(y))$ and let $F\in \mathbb{C}[x,y]$ be a polynomial such that $f=F\circ \pi:\mathbb{C}^2\to \mathbb{C}$ is tame. Suppose that the induced homomorphism $\pi_\ast: H_1(f^{-1}(t))\to H_1(F^{-1}(t))$ is surjective for every regular value $t\in \mathbb{C}$ of $f$. If $\omega$ is a polynomial $1$-form on $\mathbb{C}^2$ for which $\int_\delta \omega=0$ for all $\delta\in \ker(\pi_\ast)$ then there is another polynomial $1$-form $\alpha$ on $\mathbb{C}^2$ such that $\int_\gamma(\pi^\ast(\alpha)-\omega)=0$ for all $\gamma\in H_1(f^{-1}(t))$.
\end{lemma}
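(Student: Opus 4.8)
The statement is essentially a descent/exactness argument for Abelian integrals along the fibration $\pi$. The key structural input is that $\pi$ is a product map $(x,y)\mapsto(u(x),v(y))$, so the fibers of $f=F\circ\pi$ over $t$ are branched covers of the fibers of $F$ over $t$, and $\pi_\ast$ on $H_1$ fits into a short exact sequence $0\to\ker\pi_\ast\to H_1(f^{-1}(t))\to H_1(F^{-1}(t))\to 0$ by the surjectivity hypothesis. First I would set up the commutative picture relating the Petrov modules of $f$ and $F$: since $F$ is (generically) tame as well, one has $H_F=\bigoplus_{i}\mathbb{C}[t]\,\eta_i$ for a monomial basis, and $\pi^\ast$ carries $1$-forms on the target $\mathbb{C}^2$ to $1$-forms on the source, compatibly with the two fibrations, i.e. $f=F\circ\pi$ intertwines $df$ and $\pi^\ast(dF)$.

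**Main step.** The heart of the proof is to show that the hypothesis ``$\int_\delta\omega=0$ for all $\delta\in\ker\pi_\ast$'' forces $\omega$, modulo relatively exact forms, to be a pullback. Concretely: define a linear functional on $H_1(F^{-1}(t))$ by $[\gamma]\mapsto \int_{\tilde\gamma}\omega$, where $\tilde\gamma$ is any lift of $\gamma$; this is well-defined precisely because $\omega$ annihilates $\ker\pi_\ast$, which is exactly the ambiguity in choosing a lift. This functional varies holomorphically (indeed, it is again an Abelian-integral-type function in $t$), so by the structure of the Petrov module $H_F$ — every linear functional on homology that is ``of Abelian-integral type'' is realized by integration of some form in $H_F$ — there is a polynomial $1$-form $\alpha$ on the target with $\int_\gamma\alpha=\int_{\tilde\gamma}\omega$ for all $\gamma$ and all regular $t$. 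Then for every $\tilde\gamma\in H_1(f^{-1}(t))$ that is a lift of some $\gamma$, we get $\int_{\tilde\gamma}(\pi^\ast\alpha-\omega)=\int_\gamma\alpha-\int_{\tilde\gamma}\omega=0$. Since $\pi_\ast$ is surjective every class in $H_1(f^{-1}(t))$ is such a lift (possibly after adding an element of $\ker\pi_\ast$, on which both $\pi^\ast\alpha$ — being a pullback — and $\omega$ — by hypothesis — integrate to zero), so $\int_\gamma(\pi^\ast\alpha-\omega)=0$ for all $\gamma\in H_1(f^{-1}(t))$, as claimed.

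**The obstacle.** The delicate point is the realizability claim: that a holomorphic family of linear functionals on $H_1(F^{-1}(t))$ arising this way is actually $\int_\cdot\alpha$ for a genuine polynomial $1$-form $\alpha$, rather than merely a germ or a multivalued object. Here I would invoke the finite generation of the Petrov module $H_F$ and the fact that the period mapping $H_F\to(\text{sections of the homology bundle})^\vee$ is injective with known image after tensoring with the ring of functions in $t$; the function $t\mapsto\int_{\tilde\gamma(t)}\omega$ is an Abelian integral for $f$ restricted to the sub-local-system of lifted cycles, hence satisfies a Picard–Fuchs equation, and this is what lets one descend it to a form on the target. A secondary technical issue is the identification $\int_{\tilde\gamma}\pi^\ast\alpha=\int_{\pi_\ast\tilde\gamma}\alpha$, which is just naturality of integration under pushforward of cycles and pullback of forms, and the fact that pullbacks $\pi^\ast\alpha$ automatically integrate to zero over $\ker\pi_\ast$ because those cycles push forward to zero; both are routine once the exact sequence is in place. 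I expect no difficulty beyond bookkeeping in the product structure of $\pi$, which is what guarantees the source $f$ is again tame and the homology sequence behaves as stated; the genuinely substantive input is the Petrov-module surjectivity used in the realizability step.
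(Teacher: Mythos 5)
Your first half coincides with the paper's: you define the descended functional $[\gamma]\mapsto\int_{\tilde\gamma}\omega$ on $H_1(F^{-1}(t))$, observe it is well defined exactly because $\omega$ annihilates $\ker\pi_\ast$ and $\pi_\ast$ is onto, and note that the identity $\int_{\tilde\gamma}\pi^\ast\alpha=\int_{\pi_\ast\tilde\gamma}\alpha$ then finishes the argument once $\alpha$ exists. That part is fine and is precisely how the paper begins. The problem is the step you yourself flag as ``the obstacle'': the claim that a holomorphically varying family of functionals of Abelian-integral type on $H_1(F^{-1}(t))$ is realized by a single \emph{polynomial} $1$-form on the target $\mathbb{C}^2$. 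As stated (``the period mapping is injective with known image after tensoring with the ring of functions in $t$'') this is not an off-the-shelf fact but is exactly what has to be proved, and invoking a Picard--Fuchs equation does not do it: solutions of Picard--Fuchs equations are in general multivalued with logarithmic and non-polynomial branches, so satisfying such an equation says nothing about the coefficients of $\alpha_t$ in a basis of the Petrov module $H_F$ being polynomials in $t$. Your proposal therefore asserts the conclusion of the hard step rather than proving it.

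What the paper actually does at this point is the following, and none of it appears in your sketch. First, the fiberwise classes $\alpha_t$ are represented algebraically as $\alpha_t=\eta(t)\,\phi\cdot\mathrm{res}\bigl[\tfrac{dx\wedge dy}{F-t}\bigr]$, with $\eta$ holomorphic only away from the critical values of $f$; the whole issue is to control $\eta$. Near each critical value $c$ one uses the known moderate-growth asymptotics of Abelian integrals (there is a polynomial $s(t)$ with $\lim_{t\to c}s(t)\int_{\gamma_t}\omega<\infty$), so Riemann's extension theorem makes $s\eta$ entire. At infinity one uses the Petrov-module decomposition $\omega=\sum_i p_i(f)\omega_i+dA+B\,df$ of the \emph{given} form $\omega$ in $H_f$ to bound $s(t)\eta(t)/t^N$, forcing $s\eta$ to be a polynomial. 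Finally a divisibility argument in $H_f$ (comparing $s\pi^\ast\alpha=\sum q_i(f)\omega_i$ with $\sum s(f)p_i(f)\omega_i$) strips off the auxiliary factor $s$ and shows $\alpha$ itself is a polynomial $1$-form on $\mathbb{C}^2$, not merely a family of forms on the fibers of $F$. These three estimates --- extension across the critical values, polynomial growth at infinity, and the removal of $s$ --- are the substance of the lemma; without them your argument only produces $\alpha$ as a holomorphic section over $\mathbb{C}\setminus C$, which is not what the statement (or its use in Theorem \ref{MainThm2}) requires.
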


\begin{proof}
	By using the de Rham isomorphism it is possible to define  $\alpha_t\in H_{dR}^1(F^{-1}(t))$ by means of the equality $\int_{\sigma_t} \alpha_t:=\int_{\gamma_t} \omega|_t$, where $\sigma_t \in H_1(F^{-1}(t))$ is such that $\pi_*(\gamma_t)=\sigma_t$. The form $\alpha_t$ turns out to be well defined since $\pi_*$ is surjective and $\int_\delta \omega=0$ for all $\delta\in \ker(\pi_\ast)$ by hypothesis. These differential $1$-forms $\alpha_t$ can be taken algebraically, see \cite[p. 75]{MV}. Moreover, they yield a holomorphic global section $\alpha$ outside the critical values of $f$ such that $\alpha$ restricted to $F^{-1}(t)$ equals $\alpha_t$ for all regular value $t\in \mathbb{C}$. It remains to prove that $\alpha$ is a polynomial $1$-form on  $\mathbb{C}^2$. Each of the holomorphic forms on $F^{-1}(t)$  can be written as $\phi\cdot\textnormal{res}\left[\frac{dx\wedge dy}{F-t}\right]$ where $\phi$ is holomorphic on $F^{-1}(t)$. Therefore, $\alpha=\eta(t) \phi \cdot \textnormal{res}\left[\frac{dx\wedge dy}{F-t}\right]$ with $\eta$ being holomorphic outside the critical values of $f$. Let us consider a family of cycles $\sigma_t\in H_1(F^{-1}(t))$ such that $\int_{\sigma_t}\phi \cdot \textnormal{res}\left[\frac{dx\wedge dy}{F-t}\right]=1$, so that $\int_{\sigma_t} \alpha=\eta(t)$. There exists a polynomial $s(t)$ such that $\lim_{t\to c}s(t)\int_{\gamma_t}\omega<\infty$ for each critical value $c$ of $f$, see \cite[c. 10.1.3]{AGV}. This implies that $\lim_{t \to c} s(t) \eta(t)=\lim_{t\to c}s(t)\int_{\gamma_t}\omega<\infty$ . By the Riemann extension theorem we get that $s\eta$ is holomorphic on $\mathbb{C}$ and therefore $s\alpha$ is holomorphic on $\mathbb{C}^2$. Furthermore, since $\omega$ is a polynomial $1$-form, by the property about Petrov modules mentioned above we obtain that $\omega=\sum_{i=1}^\mu p_i(t)\omega_i+dA+Bdf$, meaning that there exists a natural number $N$ such that $$\lim_{t\to \infty}\frac{s(t)\eta(t)}{t^N}=\lim_{t\to \infty}\frac{s(t)\sum_{i=1}^\mu p_i(t)\int_{\gamma_t}\omega_i}{t^N}<\infty.$$
	
	Consequently, $s\eta$ is a polynomial which means that $s\pi^\ast(\alpha)$ is also a polynomial $1$-form. Thus, there are polynomials $q_i$ such that $s \pi^\ast (\alpha)=\sum_{i=1}^\mu q_i(f)\omega_i$. But $s\pi^\ast \alpha=s\omega$ in $H_f$, so that  $\sum _{i=1}^\mu q_i(f)\omega_i=\sum_{i=1}^\mu s(f) p_i(f)\omega_i$. Therefore, $s$ divides the $q_i$'s from which we deduce that $\pi^\ast\alpha$ is a polynomial $1$-form. Finally, if $\alpha=\alpha_1(x,y)dx+\alpha_2(x,y)dy$ then it is simple to check by arguing similarly that $\alpha_1$ and $\alpha_2$ are polynomials since both $s\alpha$ and $\pi^\ast\alpha=\alpha_1(u(x),v(y))u'(x)dx+\alpha_2(u(x),v(y))v'(y)dy$ are polynomial 1-forms.
\end{proof}

We can now state a nice application of Theorem \ref{MainThm1} which provides a generalization of the first main theorem in \cite{CM}, up to the same degree restrictions we have been working throughout. Namely:

\begin{theorem}
	\label{MainThm2}
	Let $f(x,y)=g(x)+h(y)$ be a polynomial where $h$ and $g$ are polynomials with real coefficients having real critical points. Suppose that $d=\deg(g)$ and $e=\deg(h)$ with $d\cdot e\leq 2500$ and $\gcd(d,e)\leq 2$. If $\omega$ is a polynomial 1-form on $\mathbb{C}^2$ and $\delta_t\subset f^{-1}(t)$ is a vanishing cycle then $\int_{\delta_t}\omega=0$ if and only if one of the following assertions holds true:
	\begin{enumerate}
		\item[1.]  $\omega$ is relatively exact, meaning that it can be written as $\omega=Adf+dB$, or
		\item[2.]  $\omega$ can be written as $\omega=\pi^*\alpha+Adf+dB$ where $\alpha$ is a polynomial $1$-form and $\pi(x,y)=(g_1(x),y)$ or $\pi(x,y)=(x,h_1(y))$ are maps satisfying the conditions from item 2 in Theorem \ref{MainThm1}.
	\end{enumerate}
Here $A,B\in\mathbb{C}[x,y]$.
\end{theorem}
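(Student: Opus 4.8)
The plan is to prove both implications, with the backward direction being essentially a computation and the forward direction being where the real work lies. For the backward implication, if $\omega = A\,df + dB$ is relatively exact then $\int_{\delta_t}\omega = 0$ for every cycle $\delta_t \in H_1(f^{-1}(t))$, simply because $\int_{\delta_t} dB = 0$ (closed cycle) and $\int_{\delta_t} A\,df = 0$ (since $df$ restricts to zero on a fiber of $f$). If instead $\omega = \pi^*\alpha + A\,df + dB$ with $\pi$ as in item 2 of Theorem \ref{MainThm1}, then on the distinguished vanishing cycle $\delta_t$ we have $\int_{\delta_t}\pi^*\alpha = \int_{\pi_*\delta_t}\alpha$, and by hypothesis $\pi_*\delta_t$ is null-homotopic in the relevant surface $\{g_2(z)+h(y)=t\}$ (resp. $\{g(x)+h_2(z)=t\}$), so this integral vanishes; the remaining two terms vanish as before. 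Hence $\int_{\delta_t}\omega = 0$.

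For the forward implication, suppose $\int_{\delta_t}\omega = 0$. First apply Theorem \ref{MainThm1} to the vanishing cycle $\delta_t$. In case (1) of that theorem, the monodromy orbit of $\delta_t$ spans all of $H_1(f^{-1}(t))$; since $\omega$ is a fixed polynomial form, the Abelian integral $I_\gamma(t) = \int_{\gamma_t}\omega$ is monodromy-equivariant in the obvious sense, so $\int_{\delta_t}\omega = 0$ forces $\int_{\gamma_t}\omega = 0$ for all $\gamma_t$ in the orbit, hence for all of $H_1(f^{-1}(t))$. This is precisely Condition $(*)$, and by the classical result on relatively exact forms (Gavrilov, see \cite{G}, as cited in the discussion of the Petrov module above) it follows that $\omega = A\,df + dB$, giving item 1. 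In case (2) of Theorem \ref{MainThm1}, $g = g_2\circ g_1$ (or symmetrically $h = h_2\circ h_1$) and $\pi_*\delta_t$ is null-homotopic in $F^{-1}(t)$ where $F = g_2(z)+h(y)$ and $\pi(x,y) = (g_1(x),y)$. Here I would invoke Lemma \ref{Corollary1}: the induced map $\pi_*\colon H_1(f^{-1}(t)) \to H_1(F^{-1}(t))$ is surjective with kernel generated by the monodromy orbit through $\delta_t$. Since $\int_{\delta_t}\omega = 0$ and, again by monodromy-equivariance, $\int_\delta\omega = 0$ for every $\delta$ in the orbit, we conclude $\int_\delta\omega = 0$ for every $\delta \in \ker(\pi_*)$. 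Now Lemma \ref{KeyComputation} applies (with $u = g_1$, $v = \mathrm{id}$), producing a polynomial $1$-form $\alpha$ on $\mathbb{C}^2$ with $\int_\gamma(\pi^*\alpha - \omega) = 0$ for all $\gamma \in H_1(f^{-1}(t))$. Thus $\pi^*\alpha - \omega$ satisfies Condition $(*)$ and so, by \cite{G} again, $\omega - \pi^*\alpha = A\,df + dB$, i.e. $\omega = \pi^*\alpha + A\,df + dB$, which is item 2.

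There is one gap to fill carefully: in case (2) one must check that the hypothesis of Lemma \ref{Corollary1} concerning the position $\tfrac{dm}{a}$ of $\delta_t$ is met, or more precisely that Lemma \ref{Corollary1}'s conclusion (surjectivity of $\pi_*$ and identification of its kernel with the monodromy orbit) holds for the vanishing cycle $\delta_t$ produced by Theorem \ref{MainThm1}. Since Theorem \ref{MainThm1}(2) says exactly that $\delta_t$ is a vanishing cycle with horizontal (resp. vertical) symmetry whose image is null-homotopic in the quotient surface, and the vanishing cycles with horizontal symmetry are by definition the $v_{i,lr}$ with $l = 1,\dots,\tfrac{d}{r}-1$, this matches the index condition in Lemma \ref{Corollary1} with $a = d/r$. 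So the positional hypothesis is automatically satisfied, and one should simply remark on this.

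The main obstacle I anticipate is not any single step in isolation but rather the careful bookkeeping in case (2): one must make sure that the form $\alpha$ supplied by Lemma \ref{KeyComputation} is genuinely a polynomial $1$-form on $\mathbb{C}^2$ (this is exactly what that lemma establishes via the Petrov module and Riemann extension arguments), that the surjectivity hypothesis needed to invoke Lemma \ref{KeyComputation} is exactly the one furnished by Lemma \ref{Corollary1}, and that the symmetric case $h = h_2 \circ h_1$ is handled by relabeling the roles of the two variables. Once the pieces are lined up, the proof is a short chain of citations; the subtlety is entirely in verifying that the hypotheses of Lemmas \ref{Corollary1} and \ref{KeyComputation} are met by the data coming out of Theorem \ref{MainThm1}.
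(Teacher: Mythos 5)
Your proposal is correct and follows essentially the same route as the paper: apply Theorem \ref{MainThm1} to split into the transitive case (where Gavrilov's characterization of relatively exact forms gives item 1) and the decomposable case (where Lemma \ref{Corollary1} supplies the surjectivity and kernel description needed to invoke Lemma \ref{KeyComputation}, after which the same relative-exactness argument applied to $\omega-\pi^*\alpha$ gives item 2). Your explicit verification of the converse and of the positional hypothesis in Lemma \ref{Corollary1} is more detailed than the paper's one-line treatment, but the argument is the same.
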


\begin{proof}
Let us take $\delta_t\subset f^{-1}(t)$ a vanishing cycle such that $\int_{\delta_t}\omega=0$. It follows from Theorem \ref{MainThm1} that if the orbit through $\delta_t$ with respect to the monodromy action generates $H_1(f^{-1}(t))$ then $\int_{\delta}\omega=0$ for all $\delta\in H_1(f^{-1}(t))$ which is equivalent to $\omega$ being relatively exact. Therefore, there are polynomials $A,B\in\mathbb{C}[x,y]$ such that $\omega=Adf+dB$, visit \cite{G}. Otherwise, let us suppose, without loss of generality, that the polynomial $g$ is decomposable as $g = g_2\circ g_1$ and $\pi_*(\delta_t)$ is homotopic to
zero in $\{g_2(z)+h(y)= t\}$ where $\pi(x,y)=(g_1(x),y)=(z,y)$. Set $F(z,y)=g_2(z)+h(y)$. From Lemma \ref{Corollary1} we know that the induced homomorphism $\pi_\ast: H_1(f^{-1}(t))\to H_1(F^{-1}(t))$ is surjective and that its kernel is generated by the orbit through $\delta_t$ with respect to the monodromy action. Hence, the result follows by applying Lemma \ref{KeyComputation} and by arguing as we did above with the difference $\omega-\pi^\ast(\alpha)$ instead. The converse is straightforward since $\delta_t$ is a vanishing cycle inside the fiber $f^{-1}(t)$.
\end{proof}

Based on the results obtained in this paper we predict that:

\begin{conjecture*}
Theorems \ref{MainThm1} and \ref{MainThm2} are still valid for polynomials $f(x,y)=g(x)+h(y)$ where $h$ and $g$ are polynomials with real coefficients having real critical points and satisfying that $\gcd(\deg(g),\deg(h))\leq 2$.
\end{conjecture*}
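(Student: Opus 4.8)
The plan is to notice that the bound $d\cdot e\le 2500$ enters the paper only through Lemma~\ref{label: prop orbits for 1 critical value}. Indeed, the proofs of Lemma~\ref{label: prop horizontal symmetries vs orbits}, of Lemma~\ref{Corollary1}, and of Theorems~\ref{MainThm1} and \ref{MainThm2} all invoke it as a black box; Lemma~\ref{KeyComputation} carries no degree restriction; and the appeal to Theorem~1.6 of \cite{CM} is made only for the hyperelliptic $\tilde f=y^{2}+g(x)$, for which $\gcd(2,\deg g)\le 2$ automatically and the cited result is unconditional. Hence proving the Conjecture amounts to establishing Lemma~\ref{label: prop orbits for 1 critical value} for \emph{all} $d,e$ with $\gcd(d,e)\le 2$, that is, to replacing the software check by a closed-form computation of the spectral decomposition of the intersection matrix $\Psi$ of $f_{0}=x^{d}+y^{e}$.

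The starting point is the Sebastiani--Thom structure already encoded in Formula~\eqref{intersectionofjointcycles}: in the join-cycle basis one has $\Psi=L-L^{\top}$, where $L=L_{e}\otimes L_{d}$ is the Kronecker product of the Seifert matrices of $y^{e}$ and of $x^{d}$ (of sizes $e-1$ and $d-1$, each of the shape $I-(\text{upper shift})$). Equivalently, the monodromy around the single critical value of $f_{0}$ is $h_{\ast}=h_{e}\otimes h_{d}$, with $h_{d}$ the Coxeter element of $A_{d-1}$, whose eigenvalues are the $d$-th roots of unity $\zeta_{d}^{l}=e^{2\pi il/d}$, $l=1,\ldots,d-1$; in particular $h_{\ast}$ is semisimple with eigenvalues $\zeta_{e}^{k}\zeta_{d}^{l}$ and $\Psi$ preserves its eigenspaces. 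From this one writes down the spectral data of $\Psi$ explicitly: its eigenvalues have the form $\pm i\rho_{k,l}$ with $\rho_{k,l}$ an elementary trigonometric function of $k/e$ and $l/d$, and the vanishing cycle $v_{i,j}$ expands over the eigenvectors indexed by exactly those pairs $(k,l)$ with $e\nmid ik$ and $d\nmid jl$ --- this last point coming from the discrete-sine expansion $e_{i}=\sum_{k}(\mathrm{const})\sin(ik\pi/e)\,u^{(k)}$ of the standard basis vectors.

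Granting that $\Psi$ has simple spectrum (with its forced $0$, of multiplicity $\gcd(d,e)-1$, being the only repetition allowed), the Krylov/Vandermonde argument already in the text identifies the span of the monodromy orbit of $v_{i,j}$ with the span of those eigenvectors, and it then remains a finite piece of linear algebra --- manipulating sums of sines of the shape $\sin\alpha(mp-k)+\sin\alpha(mp+k)$, with $p=\gcd(d,j)$ and $r=\gcd(e,i)$ --- to check that each of the six families of combinations listed in Lemma~\ref{label: prop orbits for 1 critical value} lies in that span; these are precisely the combinations ``symmetric about the multiples of $p$, respectively of $r$'', which is the combinatorial reason why $p$ and $r$ occur. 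Thus the whole Conjecture reduces to the single assertion: \emph{for $\gcd(d,e)\le 2$ the intersection matrix of $x^{d}+y^{e}$ has pairwise distinct eigenvalues (apart from the $0$ eigenvalue when $\gcd(d,e)=2$).}

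This last statement is where I expect the real difficulty, and it is exactly the place where the hypothesis $\gcd(d,e)\le 2$ is forced. One half is immediate: the link of $x^{d}+y^{e}$ is the torus link $T(d,e)$ with $\gcd(d,e)$ components, so the Milnor fibre has $\gcd(d,e)$ boundary circles and the radical of $\Psi$ has dimension $\gcd(d,e)-1$; hence $0$ is already a repeated eigenvalue as soon as $\gcd(d,e)\ge 3$, and the approach genuinely collapses --- compare the extra algebraic conditions needed for $d=e=4$ in \cite{LG1}. For $\gcd(d,e)=1$ the numbers $\zeta_{e}^{k}\zeta_{d}^{l}$ are pairwise distinct by the Chinese remainder theorem, so $h_{\ast}$ is regular semisimple and $\Psi$ inherits simple spectrum at once. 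The delicate case is $\gcd(d,e)=2$ (which forces $d,e$ both even): there the eigenvalues $\zeta_{e}^{k}\zeta_{d}^{l}$ do coincide in pairs, and one must show that $\Psi$ nonetheless separates each such two-dimensional $h_{\ast}$-eigenspace and that no coincidence arises across eigenspaces. I would handle this by computing, inside each $h_{\ast}$-eigenspace, the $2\times 2$ block of $\Psi$ directly from $L_{e}\otimes L_{d}$, obtaining the $\rho_{k,l}$ in closed form, and then reducing ``all $\rho_{k,l}$ distinct'' to a squarefreeness (resultant) statement about the relevant cyclotomic-type polynomial --- a clean but not entirely routine arithmetic fact that would replace, uniformly in $d$ and $e$, the finite machine verification underlying Lemma~\ref{label: prop orbits for 1 critical value}.
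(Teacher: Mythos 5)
This statement is stated in the paper as a \emph{conjecture}: the authors deliberately give no proof, and their introduction explains why --- the only place the bound $\deg(g)\cdot\deg(h)\le 2500$ enters is the machine-verified Lemma~\ref{label: prop orbits for 1 critical value}, and what the hypothesis $\gcd(d,e)\le 2$ buys ``in the checked cases'' is precisely the simplicity of the spectrum of the intersection matrix $\Psi$. Your opening reduction therefore diagnoses the problem correctly and agrees with the authors' own assessment, and your remark that the corank of $\Psi$ equals $\gcd(d,e)-1$ (one less than the number of branches at infinity) is a clean explanation of why the method must break down for $\gcd(d,e)\ge 3$. But a correct localization of the difficulty is not a proof, and the two steps that would actually close the gap are both left unestablished.

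Concretely: (i) the structural claim that ``$h_\ast$ is semisimple with eigenvalues $\zeta_e^k\zeta_d^l$ and $\Psi$ preserves its eigenspaces'' is false. The geometric monodromy is an isometry of the intersection form, $h_\ast^{\top}\Psi h_\ast=\Psi$, which only says that $\Psi$ pairs the $\zeta$-eigenspace with the $\zeta^{-1}$-eigenspace; it does not make $\Psi$ and $h_\ast$ commute. Already for $x^3+y^2$ one has $\Psi=\left(\begin{smallmatrix}0&1\\-1&0\end{smallmatrix}\right)$ while the Coxeter element of $A_2$ does not commute with it. Likewise $\Psi=L_e\otimes L_d-L_e^{\top}\otimes L_d^{\top}$ is not itself a tensor product, and $L_d$, $L_d^{\top}$ are not simultaneously diagonalizable, so neither the claimed closed form $\pm i\rho_{k,l}$ for the eigenvalues of $\Psi$ nor the asserted vanishing pattern of the coefficients $r_{k,l}$ of $v_{i,j}$ (the condition $e\nmid ik$, $d\nmid jl$) is actually derived; the discrete-sine basis diagonalizes the symmetric tridiagonal matrix $N_d+N_d^{\top}$, not the Seifert matrix. (ii) Even granting a spectral parametrization, the decisive assertion --- that the nonzero eigenvalues of $\Psi$ are pairwise distinct whenever $\gcd(d,e)\le 2$ --- is exactly the content of the conjecture and is deferred to an unproved ``squarefreeness/resultant statement''; for $\gcd(d,e)=1$ it does \emph{not} follow ``at once'' from $h_\ast$ being regular semisimple, since distinct monodromy eigenvalues in no way preclude coincidences among the $\rho_{k,l}$. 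Finally, the ``finite piece of linear algebra'' reproducing the six families of combinations in Lemma~\ref{label: prop orbits for 1 critical value} is only sketched. What you have is a plausible research program that correctly identifies the obstruction, not a proof; the conjecture remains open.
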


\end{document}